\documentclass[12pt,a4paper]{amsart}         
\usepackage[margin=3cm]{geometry}

\usepackage{amsmath,amssymb,amsthm}
\usepackage{hyperref}
\usepackage{enumerate}

\usepackage[numbers]{natbib}

\usepackage{lineno}
\title{Enumeration of Nilpotent Loops by Orbit Counting}
\author{Fariha Iftikhar}
\address{Institute of Mathematics \\
        Budapest University of Technology and Economics\\
        M\H{u}egyetem rkp 3\\
        H-1111 Budapest, Hungary}
\email{fariha.iftikhar@edu.bme.hu}
\author{G\'abor P. Nagy}
\address{Bolyai Institute \\
        University of Szeged \\
        Aradi v\'ertan\'uk tere 1\\
        H-6720 Szeged, Hungary}
\thanks{This work was supported by the National Research, Development and Innovation Fund of the Ministry for Innovation and Technology of Hungary under grant SNN 152582.}
\email{nagyg@math.u-szeged.hu}

\date{\today}

\newtheorem{theorem}{Theorem}[section]
\newtheorem{lemma}[theorem]{Lemma}
\newtheorem{proposition}[theorem]{Proposition}

\theoremstyle{definition}
\newtheorem{definition}[theorem]{Definition}
\newtheorem{remark}[theorem]{Remark}

\newcommand{\boldQ}{\mathbf{Q}}
\newcommand{\boldO}{\mathbf{O}}
\newcommand{\boldQexact}{\mathbf{Q}_{\mathrm{exact}}}
\newcommand{\boldQnonexact}{\mathbf{Q}_{\mathrm{nonexact}}}
\newcommand{\NPL}{\mathbf{NPL}}

\begin{document}

\begin{abstract}
% We study central extensions of a nilpotent loop $F$ by an elementary abelian $p$-group $A$ via normalized cocycles $\theta \in C(F, A)$. Instead of working in the quotient space of $C(F, A)$ modulo the coboundaries $B(F, A)$, we introduce an affine automorphism group $\mathrm{Aut}^{\!*}(F, A)$ combining the natural $\mathrm{Aut}(F)\times \mathrm{Aut}(A)$ action with coboundary translations. We prove that the $\mathrm{Aut}^{\!*}(F, A)$-orbits on $C(F, A)$ are in bijection with the isomorphism classes of loops $L$ with a fixed central subgroup $A$ such that $F\cong L/A$. In the enumeration of nilpotent loops, we characterize nonexact cocycles via linear centrality and order conditions, and derive an efficient orbit-counting procedure for exact extensions. The method matches existing enumerations for $|L|<24$.

We study central extensions of nilpotent loops by elementary abelian $p$-groups using normalized cocycles. By introducing an affine automorphism group acting on the full cocycle space, we obtain a direct correspondence between affine orbits and isomorphism classes of central extensions. This framework yields an efficient orbit-counting method for enumerating nilpotent loops. We reproduce the known results for orders less than 24, and enumerate the nilpotent loops of order 24 with center of size at least 3.
\end{abstract}

\maketitle

%==================%=======================%

\section{Introduction}

The classification of finite algebraic structures up to isomorphism constitutes a fundamental problem in algebra and combinatorics. Loops and Latin squares represent two prominent families of such structures, for which complete classifications are currently available only for comparatively small orders. From a combinatorial standpoint, enumerating these objects raises numerous unresolved questions, primarily due to the explosive growth in the number of distinct configurations and the increasing computational complexity of isomorphism testing. A common approach to classification problems is to reduce the original task to the analysis of extensions of simpler underlying structures; see \cite{nagypeter2019nuclear, SemaniSinovaStanovsky2023three}. In the setting of group theory, this principle is formalized by extension theory and group cohomology \cite{schreier1926erweiterung}. Specifically, a central extension of a group $G$ by an abelian group $A$ can be described in terms of $2$-cocycles and is classified up to equivalence by the second cohomology group $H^2(G, A)$. The cocycle-based description of loop extensions naturally embeds into the more general framework of the theory of loops and quasigroups \cite{bruck1971survey,pflugfelder1990quasigroups}. 

Daly and Vojt\v{e}chovsk\'y \cite{DalyVojtechovsky2009} initiated a systematic classification of nilpotent loops based on cocycle-defined central extensions and the corresponding cohomological framework. Their methodology yields a complete enumeration of all nilpotent loops of order less than $24$. Moreover, Daly and Vojt\v{e}chovsk\'y proved an explicit formula for the number of nilpotent loops of order $2q$, where $q$ is an odd prime, based on classifying central extensions of $\mathbb{Z}_q$ by $\mathbb{Z}_2$ up to automorphism-equivalence; see \cite[Theorem 7.1]{DalyVojtechovsky2009}. Building on this result, Clavier \cite{clavier2012enumeration} derived a formula for counting the isotopy classes of nilpotent loops of order $2q$. 

At the 2nd Mile High Conference on Nonassociative Mathematics (Denver, 2009), Vojt\v{e}chovsk\'y \cite{Vojtechovsky2009_ProblemNilpotentLoops24} formulated the open problem of determining, up to isomorphism, the number of nilpotent loops of order $24$. This question remained unresolved for more than a decade, primarily due to the substantial computational complexity of analyzing cocycles arising from central extensions of loops of order $12$. 

In the present work, we resolve this problem for loops of order $24$ whose center has size at least $3$. We anticipate that our methodology can be further developed to enumerate all nilpotent loops of order $24$.

The objective of this paper is to introduce an alternative formulation of the equivalence relation employed in cohomological classification. 
Rather than working with the quotient $H(F, A) = C(F, A) / B(F, A)$, we describe the relevant equivalence directly as the orbit relation arising from an affine group action on the entire cocycle space $C(F, A)$. 
This group is generated by the linear action of $\mathrm{Aut}(F)\times \mathrm{Aut}(A)$ together with translations of the form $\theta \mapsto \theta + \partial \tau$, where $\tau : F \to A$ satisfies $\tau(1) = 0$ and $\partial\tau(x,y)=\tau(xy)-\tau(x)-\tau(y)$.  
The resulting matrix group acts on a finite-dimensional vector space and is particularly amenable to implementation in GAP and to explicit orbit computation.

Our approach to determining the number of nilpotent loops of order $n$ is based on the following principal steps:

\begin{enumerate}
\item Introduce loop cocycles for \textbf{abstract central loop extensions} and certain restricted subclasses.
\item Define a group acting affinely on the affine spaces of loop cocycles so that orbits correspond to isomorphic extensions.
\item Count these orbits to determine the number of extensions up to isomorphy.
\item Apply inclusion-exclusion to obtain the number of nilpotent loops up to isomorphy.
\end{enumerate}

The structure of our paper is as follows. In Section~2, we review central extensions of loops and introduce the notion of exact, nonexact, and stretched extensions. In addition, we reduce the problem of enumerating nilpotent loops to that of counting exact extensions. Section~3 recalls the cocycle-based model for central extensions, together with cocycles and coboundary spaces. In Section~4, we introduce the affine automorphism group $\mathrm{Aut}^{\!*}(F, A)$, construct its natural affine action on the space of cocycles. We then prove that $\mathrm{Aut}^{\!*}(F, A)$-orbits on $C(F, A)$  are in bijection with the isomorphism classes of central extensions of $F$ by $A$. In Section~5, we derive an orbit counting procedure by Burnside's lemma and explain how fixed point counts reduce to linear algebra over finite fields. In Section~6, we develop the restriction method for efficiently computing exact extensions. We also introduce restricted cocycle subspaces and prove tameness conditions that justify the orbit counting problem on smaller subspaces. Section~7 reports the computational experiments that reproduce the known enumerations for orders less than $24$  and provides a performance comparison. In Section~8, we apply the method to the enumeration of nilpotent loops of order $24$ with center of size at least $3$.

\section{Central extensions of loops} 

A nonempty set $Q$ equipped with a binary operation $\cdot$ is called a \textit{loop} if there exists a neutral element $1 \in Q$ such that $1\cdot x = x\cdot 1 = x$ for all $x \in Q$, and if, for every $x \in Q$, the left and right translation maps
\[
L_x \colon Q \to Q,\quad L_x(y) = x\cdot y,\qquad
R_x \colon Q \to Q,\quad R_x(y) = y\cdot x
\]
are bijective. Finite loops are in one-to-one correspondence with normalized Latin squares, and groups are precisely those loops for which the binary operation is associative. A subloop $K \leq Q$ is said to be \textit{normal} in $Q$ if the conditions
\[
xK = Kx,\quad x(yK) = (xy)K,\quad x(Ky) = (xK)y,\quad (Kx)y = K(xy)
\]
hold for all $x,y \in Q$. For $ \ K\triangleleft Q$, the factor loop $Q/K$ is defined analogously to the factor group construction in group theory. If $F \cong Q/K$, then $Q$ is referred to as an \textit{extension} of $F$ by $K$.

The center $Z(Q)$ of a loop $Q$ is defined as the set of all elements that both commute and associate with every element of $Q$. Any subloop $A$ of $Z(Q)$ is necessarily a normal subloop of $Q$. For a prime number $p$, we denote by $Z_p(Q)$ the subgroup of the center $Z(Q)$ consisting of all elements whose orders divide $p$. Throughout this paper, $A$ will denote an abelian group, written additively. If $A$ is an elementary abelian $p$-group, then it can be (and it will be) naturally identified with a vector space of dimension $d$ over the finite field $\mathbb{F}_{p}$. An extension of a loop $F$ by a central subloop $A$ is referred to as a \textit{central extension}.

For a loop $L$, we denote by $[L]$ the isomorphism class of $L$. If $A \leq L$ is a subloop, we write $[L,A]$ for the isomorphism class of the pairs $(L,A)$, where an isomorphism between $(L_1,A_1)$ and $(L_2,A_2)$ is defined to be a loop isomorphism $f : L_1 \to L_2$ satisfying $f(A_1) = A_2$. For a loop $F$ and abelian group $A$, we use the notation
\begin{align} \label{eq:Qbold}
\boldQ(F,A) &= \bigl\{[L,A]\mid A \leq Z(L),\ L/A \cong F\bigr\}.
\end{align}
If $A=C_p^d$ is an elementary abelian $p$-group of order $p^d$, then we write $\boldQ(F,p^d) = \boldQ(F,A)$.

\subsection{Exact and nonexact extensions}
Daly and Vojt\v{e}chovsk\'y \cite{DalyVojtechovsky2009} have already noticed that using linear algebra, it is relatively easy to classify nilpotent loops whose center has prime order, but dealing with "large center loops" requires more elaborate tools. When extending by an elementary abelian group $A\cong C_p^d$, we will distinguish between three cases; the most favorable case occurs when $A$ is the full center of the extension loop.

\begin{definition}\label{def:Q-exact}
Let $F$ be a loop, $d$ a positive integer, and $p,r$ distinct primes. Let $L$ be a loop.
\begin{enumerate}[(a)]
\item We say that $L$ is \textit{$p^d$-exact}, if $|Z_p(L)|=p^d$, and $L$ is \textit{$p^d$-nonexact}, if $|Z_p(L)|>p^d$. 
\item The $p^d$-exact extension is \textit{unstretched,} if $|Z(L)|=p^d$. If $|Z_p(L)|=p^d$, and $|Z_r(L)|>1$, then we say that $L$ is \textit{stretched towards $r$}.
\end{enumerate}

\end{definition}

When the choice of $p^d$ is clear from the context, we will simply refer to loops as exact or nonexact. Notice that isomorphic nonexact loops may be non-isomorphic as central extensions of $F$ by a central subloop $A\cong C_p^d$. 

We define
\begin{align} \label{eq:Qbold-exact}
\boldQexact(F,p^{d}) &= \bigl\{[L]\mid |Z_{p}(L)|=p^d,\ L/Z_{p}(L) \cong F\bigr\}
\end{align}
and
\begin{align} \label{eq:Qbold-stretched}
\boldQexact(F,p^{d};\text{$r$-stretched}) &= \bigl\{[L] \in \boldQexact(F,p^{d}) \mid Z_r(L)\neq \{1\}\}.
\end{align}
One embeds $\boldQexact(F,p^{d})$ into $\boldQ(F, p^{d})$ by $[L]\mapsto [L,Z_p(L)]$. By some abuse of language, this allows us to consider $\boldQexact(F,p^{d})$ as a subset of $\boldQ(F,p^{d})$. If $A$ is a proper subloop of $Z_p(L)$, then we say that $[L, A]$ (or its representative $(L, A)$) is an \textit{nonexact extension} of $F=L/A$. In notation, 
\begin{align} \label{eq:Qbold-nonexact}
\boldQnonexact(F,p^d) &= \bigl\{[L,A]\mid A \lneq Z_p(L),\ L/A \cong F\bigr\} = \boldQ(F, p^d) \setminus \boldQexact(F, p^d).
\end{align}

The following lemma provides equivalent formulations of the fact that the central extension $L$ is nonexact or stretched. Recall that $F=L/A$, that is, an element $x\in F$ is in fact a coset $x=uA$ with $u\in L$. 

\begin{lemma}\label{lem:stretched}
Let $p$ and $r$ be distinct primes. Fix $[L,A]\in \boldQ(F,p^d)$ with $F=L/A$. Then $L$ is $r$-stretched if and only if there exists an element $z\in Z_r(F)$ such that $z$ is contained (as a coset) in $Z(L)$.   
\end{lemma}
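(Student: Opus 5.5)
The plan is to prove both directions directly from the definitions. I will use two facts throughout. First, $Z(L)$ is an abelian group, being a subloop of elements that commute and associate with everything. Second, $A\le Z(L)$, so a coset $uA$ lies in $Z(L)$ as soon as one of its elements does. I read ``$r$-stretched'' in the sense of the condition $Z_r(L)\neq\{1\}$ from the definition, since the $p^d$-exactness requirement plays no role in the argument. I also read the statement as asking for a nontrivial $z\in Z_r(F)$; with $z=1=A$ the right-hand side would hold trivially, so the lemma cannot be meant that way.

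($\Rightarrow$) Suppose $1\neq w\in Z_r(L)$, and put $z=wA\in F$.
\begin{itemize}
\item The natural map $L\to L/A$ is a surjective homomorphism, so it maps central elements to central elements. Hence $z\in Z(F)$.
\item We have $z^r=w^rA=A$, so the order of $z$ divides $r$.
\item $z$ is nontrivial. Otherwise $w\in A$, and then the order of $w$ would be both a power of $p$ and equal to $r$, which is impossible.
\item Since $w\in Z(L)$ and $A\le Z(L)$, the whole coset $wA$ is contained in $Z(L)$.
\end{itemize}

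($\Leftarrow$) Let $1\neq z=uA\in Z_r(F)$ with $uA\subseteq Z(L)$. Then $u\in Z(L)$, and all powers of $u$ are unambiguous because $Z(L)$ is a group. From $z^r=1$ we get $u^r\in A$, and since $A$ is elementary abelian, $u^{rp}=1$.

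The candidate element is $w=u^p$. It lies in $Z(L)$, and $w^r=(u^r)^p=1$. The only real point is to show $w\neq 1$. Suppose instead that $u^p=1$. Choose integers $a,b$ with $ap+br=1$, which is possible since $p\neq r$. Then $u=(u^p)^a(u^r)^b\in A$, which contradicts $z\neq 1$. Hence $w$ is a nontrivial element of $Z_r(L)$, and $L$ is $r$-stretched.

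I expect no real obstacle. The only delicate step is the coprimality argument just given, which pushes $u$ to an element of exact order $r$ and shows it is not killed in the process.
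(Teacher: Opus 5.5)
Your proof is correct and has the same structure as the paper's. The forward direction is identical: a nontrivial $w\in Z_r(L)$ cannot lie in the $p$-group $A$, so $wA$ is a nontrivial element of $Z_r(F)$ whose coset is central. In the converse, both arguments start from a representative $u$ of the central coset with $u^r\in A$.

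The two proofs differ at the next step, and there yours is the more careful one. The paper concludes $u^r=1$ directly from $u^r\in A$, on the grounds that $A$ has no nontrivial $r$-elements. This does not follow for an arbitrary representative, since $u^r$ is just some element of $A$. For example, take $L=C_6=\langle g\rangle$, $A=\langle g^3\rangle$, $p=2$, $r=3$ and $u=g$. Then $gA$ is a nontrivial element of $Z_3(L/A)$ and the coset is central, but $u^3=g^3\neq 1$.

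Your passage to $w=u^p$, with the B\'ezout argument showing $w\neq 1$, closes this gap. An equivalent fix is to replace $u$ by $uc$ with $c\in A$ chosen so that $c^r=u^{-r}$. Such a $c$ exists because $c\mapsto c^r$ is bijective on $A$, and $(uc)^r=u^rc^r$ since both factors lie in the abelian group $Z(L)$.

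Your remark that $z$ must be nontrivial is also right. The statement as printed omits this condition, and the paper's proof tacitly needs it to conclude $u\neq 1$.
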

\begin{proof}
    The loop $L$ is $r$-stretched, then there exists a nontrivial element  $u \in Z_r(L)$. Since $A$ is $p$-group with $p\neq r$, we have $u \notin A$, and hence $z=uA \in Z_r(F)$ with $z\in Z(L)$.
    Conversely, if there exists $z \in Z_r(F)$ such that $z= uA\in Z(L)$ for some $u\in L$, then $u^r \in A$ and, as $A$ has non trivial $r$-elements, $u^r=1$, so $u\in Z_r(L)\setminus \{1\}$ and $L$ is $r$-stretched.
\end{proof}

\begin{lemma}\label{lem:nonexact}
Let an extension $[L, A]\in \boldQ(F,p^d)$ be fixed with $F=L/A$. Then the following statements are equivalent.
\begin{enumerate}[(i)]
\item $[L,A]$ is an nonexact central extension.
\item $A$ is a proper subgroup of $Z_{p}(L)$, that is, $A\subsetneq Z_{p}(L)$.
\item $Z_{p}(L)/A$ is a nontrivial subgroup of $Z_{p}(F)$.
\item There exists an element $z\in Z_{p}(F)$ with $z\neq 1$ which is contained (as a coset) in $Z_{p}(L)$.
\end{enumerate}
\end{lemma}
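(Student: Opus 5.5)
The plan is to prove the cycle (i)$\Leftrightarrow$(ii)$\Rightarrow$(iii)$\Rightarrow$(iv)$\Rightarrow$(ii). The equivalence (i)$\Leftrightarrow$(ii) is essentially definitional. By \eqref{eq:Qbold-nonexact}, $[L,A]$ is nonexact exactly when $A\lneq Z_p(L)$. We must also note that $A\le Z_p(L)$ always holds: $A\le Z(L)$ and $A\cong C_p^d$ has exponent $p$. Hence "proper subgroup" and "$|Z_p(L)|>p^d$" mean the same thing, and this agrees with the exactness notion used to embed $\boldQexact(F,p^d)$ into $\boldQ(F,p^d)$.

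For (ii)$\Rightarrow$(iii), I would first observe that $Z(L)$ is an abelian group, since central elements associate and commute with everything. So $Z_p(L)$ is an elementary abelian $p$-group containing $A$, and the quotient $Z_p(L)/A$ is a subgroup of $L/A=F$. It is nontrivial precisely because $A\ne Z_p(L)$. It remains to check that $Z_p(L)/A\le Z_p(F)$.

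\begin{itemize}
\item Take $u\in Z_p(L)$ and its coset $uA$.
\item For any $xA,yA\in F$, we have $(uA)(xA)=(ux)A=(xu)A=(xA)(uA)$.
\item Associativity identities such as $((uA)(xA))(yA)=(uA)((xA)(yA))$ follow in the same way from the centrality of $u$.
\item Since the natural projection $L\to F$ is a homomorphism, $uA$ lies in $Z(F)$.
\item Finally, $(uA)^p=u^pA=A$, so the coset has order dividing $p$, giving $uA\in Z_p(F)$.
\end{itemize}

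For (iii)$\Rightarrow$(iv), take any nontrivial element $z=uA$ of $Z_p(L)/A$ with $u\in Z_p(L)$. Then $z\neq 1$ in $F$ and $z\in Z_p(F)$ by (iii). As a coset it contains $u\in Z_p(L)$. Here "contained (as a coset) in $Z_p(L)$" is read as meeting $Z_p(L)$, or equivalently, since $A\le Z_p(L)$ and $Z_p(L)$ is a union of $A$-cosets, lying entirely inside it.

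For (iv)$\Rightarrow$(ii), suppose $z\ne 1$ with $z=uA\subseteq Z_p(L)$. Then $u\in Z_p(L)$ and $u\notin A$ because $z\neq 1$. Thus $A\subsetneq Z_p(L)$.

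The only delicate step is the interpretation of "contained as a coset" in (iv). In contrast with Lemma~\ref{lem:stretched}, no order argument on $u^p$ is needed here, because the condition already places $u$ in $Z_p(L)$. I would state explicitly that the whole coset $uA$ lies in $Z_p(L)$ as soon as one representative does, since $A\le Z_p(L)$ and $Z_p(L)$ is a subgroup. This makes the coset-wise phrasing unambiguous.
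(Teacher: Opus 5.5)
Your proof is correct and follows essentially the same route as the paper. In both, (i)$\Leftrightarrow$(ii) is definitional, (ii)$\Rightarrow$(iii) comes from projecting an element of $Z_p(L)\setminus A$ to a nontrivial element of $Z_p(F)$, and the remaining implications unwind the coset condition; your cycle through (iv)$\Rightarrow$(ii), instead of the paper's (iii)$\Rightarrow$(ii) plus ``(iii)$\Leftrightarrow$(iv) by definition'', is only a cosmetic difference. You also make explicit two points the paper uses tacitly: that $A\le Z_p(L)$ always holds, and that every element of $Z_p(L)$, not just one, projects into $Z_p(F)$.
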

\begin{proof}
\noindent\textit{(i)$\Longleftrightarrow$(ii)} By definition, the extension $[L,A]$ is exact if and only if $A=Z_p(L)$. Consequently, $[L,A]$ is nonexact if and only if $A\subsetneq Z_{p}(L)$.

\medskip
\noindent\textit{(ii)$\Longrightarrow$(iii)}  
Assume that $A\subsetneq Z_{p}(L)$. Let $u \in Z_p(L)\setminus A$. The element $z = uA \in F$ is a nontrivial central $p$-element of $L$ (distinct from the identity element). Consequently, $z \in Z_p(F)$, and thus $Z_p(L)/A$ is a proper subgroup of $Z_p(F)$.

\medskip
\noindent\textit{(iii)$\Longrightarrow$(ii)}  
Assume that $Z_p(L)/A$ is a proper subgroup of $Z_p(F)$. Then there exists an element $uA \in Z_p(L)/A$ such that $uA \neq A$. Since $u \in Z_p(L)$ and $uA \neq A$, it follows that $u \notin A$. Hence, $A \subsetneq Z_p(L)$.

\medskip
\noindent\textit{(iii)$\iff$(iv)}  
This equivalence follows directly from the definition.
\end{proof}

\subsection{Nilpotent loops}

The upper central series of $L$ is defined recursively by
\[
Z_{(0)}(L) = 1,\qquad L / Z_{(i+1)}(L) = Z\bigl(L / Z_{(i)}(L)\bigr),
\]
for all integers $i \ge 0$. The loop $L$ is said to be \textit{centrally nilpotent of class $n$} if
\[
Z_{(n-1)}(L) \;<\; Z_{(n)}(L) \;=\; L
\]
for some integer $n \ge 1$.
\begin{definition}
 For a positive integer $m$, we define 
 \[
 \NPL(m)=\bigl\{[L] \mid L \quad \text{is a centrally nilpotent loop of order } m \bigr\}
 \]
and 
\[
\NPL(m,p^{d})=\bigl\{[L] \in \NPL(m)\mid |Z_{p}(L)|=p^{d}\bigr\}
\] 
\end{definition}

If $p^d \nmid m$, then $\NPL(m,p^{d})=\varnothing$. If $d_1\neq d_2$, then
\[
\NPL(m,p^{d_1}) \cap \NPL(m,p^{d_2}) = \varnothing
\]

If $m$ is fixed and $p_1,p_2$ are distinct prime divisors of $m$, then the sets $\NPL(m,p_1^{d_1})$ and $\NPL(m,p_2^{d_2})$ may have a nontrivial intersection. Nevertheless, if $m$ possesses only a small number of distinct prime divisors, then the total number of nilpotent loops of order $m$ can be determined from the cardinalities $|\NPL(m,p^{d})|$ taken over all prime powers $p^d$ dividing $m$. For instance, the total number of nilpotent loops of order $20$ can be computed as
\begin{align*}
|\NPL(20)| = |\NPL(20,2)| + |\NPL(20,4)| + |\NPL(20,5)| -1,
\end{align*}
where the closing $-1$ is for the abelian group $C_2\times C_2 \times C_5$.

The enumeration of nilpotent loops of order $m$ can be reduced to the enumeration of exact central extensions of smaller loops $F$ by an elementary abelian $p$-group $A$. More precisely, as the following obvious lemma tells us, the value of $|\NPL(m,p^d)|$ can be computed from the number $|\boldQexact(F,p^d)|$ of exact extensions of smaller order nilpotent loops. 

\begin{lemma}\label{lem:nilpot-decomposition}
Assume that $p^{d}$ divides $m$. Then
\[
|\NPL(m,p^{d})|
=
\sum_{F\in\NPL(m/p^{d})}
|\boldQexact(F,p^{d})|. 
\]
\end{lemma}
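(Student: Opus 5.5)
The plan is to build an explicit bijection between $\NPL(m,p^d)$ and the disjoint union $\bigsqcup_{F} \boldQexact(F,p^d)$, where $F$ runs over a set of representatives of $\NPL(m/p^d)$. The map sends $[L]\in\NPL(m,p^d)$ to the class $[L]$ viewed inside $\boldQexact(F,p^d)$, where $F$ is the representative isomorphic to $L/Z_p(L)$. Three checks are needed.

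\textbf{Well-definedness.} First, $Z_p(L)$ is a characteristic subloop of $L$, so isomorphic loops give isomorphic quotients $L/Z_p(L)$; thus $F$ depends only on $[L]$. Second, $|L/Z_p(L)|=m/p^d$ because $|Z_p(L)|=p^d$. Third, we need $L/Z_p(L)$ to be centrally nilpotent. This is the standard fact that a quotient of a centrally nilpotent loop by a normal (here central) subloop is centrally nilpotent. To see it, take the images of the upper central series $Z_{(i)}(L)Z_p(L)/Z_p(L)$. By induction on $i$ these lie in $Z_{(i)}(L/Z_p(L))$, since the image of a central element modulo $Z_{(i)}$ is central modulo the image. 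Hence the image series reaches the whole quotient in at most $n$ steps. Finally, $[L]$ lies in $\boldQexact(F,p^d)$ by definition \eqref{eq:Qbold-exact}, since $|Z_p(L)|=p^d$ and $L/Z_p(L)\cong F$.

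\textbf{Inverse map.} Conversely, take $[L]\in\boldQexact(F,p^d)$ with $F\in\NPL(m/p^d)$. Then $|L|=p^d\cdot m/p^d=m$ and $|Z_p(L)|=p^d$. We must show $L$ is centrally nilpotent, and this is the only step with any content. Since $Z_p(L)\le Z(L)$ and $L/Z_p(L)\cong F$ is nilpotent of class $c$, we lift the upper central series of $F$. We show by induction that the preimage of $Z_{(i)}(F)$ is contained in $Z_{(i+1)}(L)$. The base case is $Z_p(L)\le Z_{(1)}(L)=Z(L)$. For the inductive step, an element whose image is central modulo $Z_{(i-1)}(F)$ is central in $L$ modulo the preimage, which lies in $Z_{(i)}(L)$. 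Commuting and associating modulo a larger normal subloop follows from doing so modulo a smaller one via the third isomorphism theorem for loops. Hence $Z_{(c+1)}(L)=L$, so $[L]\in\NPL(m,p^d)$.

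\textbf{Disjointness and bijectivity.} The sets $\boldQexact(F,p^d)$ for non-isomorphic representatives $F$ are pairwise disjoint, because $[L]$ determines the isomorphism type of $L/Z_p(L)$. The two maps are mutually inverse since both are the identity on isomorphism classes. Taking cardinalities then gives the formula. The only mildly nontrivial step is the nilpotency transfer in both directions, which is routine loop theory and which I would cite or state briefly.
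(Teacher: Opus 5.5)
Your proposal is correct and takes the same route as the paper. The paper simply asserts that $\NPL(m,p^{d})$ is the disjoint union of the sets $\boldQexact(F,p^{d})$ over $F\in\NPL(m/p^{d})$; you supply the routine checks it leaves implicit, namely that $L/Z_p(L)$ is an isomorphism invariant of order $m/p^d$, and that central nilpotency passes both to this quotient and back up to the central extension.
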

\begin{proof}
The claim follows from the fact that the union
\[
\NPL(m,p^{d})
=
\mathop{\dot{\bigcup}}_{F\in\NPL(m/p^{d})}
\boldQexact(F,p^{d}),
\]
is disjoint.
\end{proof}

\section{Loop cocycles of central extensions}

Let $F$ be a finite loop with neutral element $1$, written multiplicatively, and let $A$ be a finite abelian group written additively. Consider a central extension $L$ of $F$ by $A$, and fix a system of coset representatives $\sigma(uA)$, $u \in L$, such that $\sigma(A) = 1$. Then the map
\[
(x,a) \mapsto \sigma(x)a
\]
defines a bijection $L/A \times A \to L$. Consequently, up to isomorphism of extensions, we may identify the underlying set of $L$ with the Cartesian product $F \times A$, and regard $A \leq L$ as the subset
\[
\{(1,a) \mid a \in A\}.
\]
In this framework, central extensions $L$ of $F$ by $A$ are determined by 2-cocycles $\theta: F \times F \to A$. The isomorphism classes of such pairs $[L, A]$ are then parametrized by the orbits of an appropriate affine automorphism group acting on the space of cocycles.

A function $\theta : F\times F \to A$ is called a \textit{normalized cocycle} if
\[
\theta(1,x)=\theta(x,1)=0 \qquad \text{for all } x\in F.
\]
The set of all normalized cocycles is denoted $C(F, A)$.

Let \[
\mathrm{Map}_{0}(F,A)=\{\tau:F\to A;\ \tau(1)=0\},
\]

\[
\mathrm{Hom}(F,A)=\{\tau:F\to A;\ \tau \quad \text{is a homomorphism of loops}\},
\]
 Then the mapping
\[
\partial \colon \mathrm{Map}_{0}(F,A) \to C(F,A), \qquad 
\tau \mapsto \partial{\tau}
\]
is defined by

\[
\partial{\tau}(x,y)=\tau(xy)-\tau(x)-\tau(y)
\]
is a homomorphism of groups with kernel $\mathrm{Hom}(F,A)$
The set of coboundaries forms a subgroup $B(F, A)\subseteq C(F, A)$. 
Since the kernel of the map $\tau\mapsto\partial{\tau}$ is $\mathrm{Hom}(F,A)$, we obtain
\[
B(F,A)\cong \{\tau:F\to A\mid\tau(1)=0\}/\mathrm{Hom}(F,A).
\]

The following proposition is part of the standard folklore in the field; its proof proceeds identically to that of the corresponding result for central extensions of groups.

\begin{proposition} \label{prop:cocycles}
Given $\theta\in C(F,A)$, the multiplication rule
\[
(x,a)(y,b)=\bigl(xy,\; a+b+\theta(x,y)\bigr)
\]
defines a loop on the set $F\times A$, denoted $X(F,A,\theta)$. Conversely, if $L$ is a loop and $A\leq Z(L)$, then there is a cocycle $\theta \in C(L/A,A)$ such that $L\cong X(L/A,A,\theta)$. 
\end{proposition}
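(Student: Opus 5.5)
The proof has two parts, matching the two halves of Proposition~\ref{prop:cocycles}, and in both the work is direct verification with the loop axioms.

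\textbf{First half.} Fix $\theta\in C(F,A)$ and put $L=X(F,A,\theta)$ with the given multiplication.

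The element $(1,0)$ is the neutral element. Indeed, $(1,0)(y,b)=(y,\,b+\theta(1,y))=(y,b)$ by normalization, and symmetrically on the other side.

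To see that the translations are bijective, fix $(x,a)$ and $(z,c)$ and solve $(x,a)(y,b)=(z,c)$. The first coordinate gives $xy=z$. Since $F$ is a loop, this has the unique solution $y=x\backslash z$. The second coordinate then forces $b=c-a-\theta(x,y)$, which is unique. Hence $L_{(x,a)}$ is bijective, and the same argument applies to right translations. So $L$ is a loop.

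The projection $(x,a)\mapsto x$ is a surjective homomorphism with kernel $\{(1,a)\}\cong A$, so $L/A\cong F$. Since $\theta$ takes values in $A$ and $A$ is abelian, the subset $A$ lies in $Z(L)$:
\begin{itemize}
\item Commutation: $(1,a)(y,b)=(y,a+b)=(y,b)(1,a)$, again by normalization.
\item Association: each associator identity involving $(1,a)$ in any of the three positions reduces to $\theta(1,\cdot)=\theta(\cdot,1)=0$ and commutativity of $A$. For example, $((1,a)(y,b))(z,c)=(yz,\,a+b+c+\theta(y,z))=(1,a)((y,b)(z,c))$.
\end{itemize}

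\textbf{Converse.} Let $A\le Z(L)$, and choose a transversal $\sigma\colon L/A\to L$ with $\sigma(A)=1$. Define $\phi\colon F\times A\to L$ by $\phi(x,a)=\sigma(x)a$. It is a bijection because every element of $L$ lies in exactly one coset $\sigma(x)A$, and within that coset $a\mapsto\sigma(x)a$ is injective by cancellation.

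For $x,y\in F$, the elements $\sigma(x)\sigma(y)$ and $\sigma(xy)$ lie in the same coset $xyA$. Since $A$ is normal, there is a unique $\theta(x,y)\in A$ with
\[
\sigma(x)\sigma(y)=\sigma(xy)\,\theta(x,y).
\]
Taking $x=1$ or $y=1$ and using $\sigma(1)=1$ gives $\theta(1,y)=\theta(x,1)=0$, so $\theta\in C(F,A)$.

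\textbf{Main obstacle: the homomorphism check.} We must show $\phi$ is a homomorphism, i.e.
\[
(\sigma(x)a)(\sigma(y)b)=\sigma(xy)\bigl(a+b+\theta(x,y)\bigr),
\]
where the product of central elements is written additively. This is the only step that uses more than the group-theoretic intuition, because it has to be done without associativity of $L$. The tool is that central elements both commute and associate with everything, so they can be moved freely:
\[
(\sigma(x)a)(\sigma(y)b)=\bigl((\sigma(x)\sigma(y))a\bigr)b=\sigma(xy)\,\theta(x,y)\,a\,b .
\]
Each rearrangement is a single application of one of the defining identities of $Z(L)$. Once these are checked one at a time, $\phi$ is an isomorphism $X(F,A,\theta)\to L$ carrying the subset $\{(1,a)\}$ onto $A$, which completes the proof.
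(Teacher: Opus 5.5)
Your proof is correct and is the routine verification the paper has in mind; the paper writes no proof, only remarking that the result is folklore and that the argument proceeds exactly as for central extensions of groups. The one place where loops differ from groups is the homomorphism check for $\phi(x,a)=\sigma(x)a$, and you handle it correctly by replacing associativity with the nuclear and commutation identities satisfied by elements of $A\le Z(L)$.
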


Notice that in $L=X(F,A,\theta)$, $1\times A = \{(1,a) \mid a\in A\}$ is a central subloop isomorphic to $A$. We often identify $A$ with $1\times A$. 

%Moreover, if $\tau\in \mathrm{Map}_0(F,A)$, then the map $(x,a)\longmapsto (x,\; a+\tau(x))$ is an isomorphism $X(F,A,\theta)\cong X(F,A,\theta+\partial{\tau})$. 

% Since the sets $\mathrm{Map}_0(F, A)$ and $C(F, A)$ consist of $A$-valued maps, they can be considered as vector spaces over $\mathbb{F}_p$. In particular, for $x,y\in F$, the cocylce value $\theta(x,y)$ is a $d$-tuple over $\mathbb{F}_{p}$. Moreover, $\mathrm{Aut}(A)\cong GL(d,p)$, that is, the component $\beta$ of $(\alpha,\beta,\tau)\in\mathrm{Aut}^{\!*}(F,A)$
% is an invertible $d\times d$ matrix over $\mathbb{F}_p$. 

\section{The affine automorphism group and isomorphisms of extensions}

Daly and Vojt\v{e}chovsk\'y \cite{DalyVojtechovsky2009} define the group $\mathrm{Aut}(F,A)=\mathrm{Aut}(F)\times\mathrm{Aut}(A)$ and its right action 
\[
\theta^{(\alpha,\beta)}(x,y)
=\left(\theta(x^{\alpha^{-1}},y^{\alpha^{-1}})\right)^\beta
\]
on $C(F,A)$. Lemma~3.1 of \cite{DalyVojtechovsky2009} says that the map $(x,a)\longmapsto (x^{\alpha},\; a^{\beta})$ is an isomorphism $X(F,A,\theta)\cong X(F,A,\theta^{(\alpha,\beta)})$. The equivalence relation of Daly and Vojt\v{e}chovsk\'y combines two operations on cocycles, the linear action of $\mathrm{Aut}(F, A)$, and addition of coboundaries $\partial{\tau}$. In our approach, we combine these two operations into a single algebraic structure that acts affinely on both $F\times A$ and $C(F, A)$.

\begin{definition}\label{def:1}
The \textit{affine automorphism group} is
\[
\mathrm{Aut}^{\!*}(F,A)
=
\bigl\{(\alpha,\beta,\tau)
\;\big|\;
\alpha\in\mathrm{Aut}(F),\ \beta\in\mathrm{Aut}(A),\ 
\tau:F\to A,\ \tau(1)=0\bigr\},
\]
with multiplication
\[
(\alpha,\beta,\tau)(\alpha',\beta',\tau')
=
\bigl(\alpha\alpha',\; \beta\beta',\; \alpha\tau' + \tau\beta'\bigr),
\]
where
\[
(\alpha\tau')(x)=\tau'(x^{\alpha}),\qquad
(\tau\beta')(x)=(\tau(x))^{\beta'}.
\]
This multiplication arises from the composition of the block matrices.
\[
\begin{bmatrix}
\alpha & \tau \\
0 & \beta
\end{bmatrix},
\qquad
\begin{bmatrix}
\alpha' & \tau' \\
0 & \beta'
\end{bmatrix}.
\]
Thus $\mathrm{Aut}^{\!*}(F,A)$ is a group.
\end{definition}

\begin{definition}\label{def:2}
Each element $(\alpha,\beta,\tau)\in\mathrm{Aut}^{\!*}(F,A)$ acts on $F\times A$ by
\[
(x,a)^{(\alpha,\beta,\tau)}
=
\bigl(x^{\alpha},\; a^{\beta} + \tau(x)\bigr).
\]
This extends the usual action of $\mathrm{Aut}(F, A)$ by allowing translation in the $A$-coordinate.
\end{definition}

\begin{lemma}
The rule
\[
(x,a)^{(\alpha,\beta,\tau)}
=
\bigl(x^{\alpha},\,a^{\beta}+\tau(x)\bigr),
\qquad
(\alpha,\beta,\tau)\in\mathrm{Aut}^{\!*}(F,A),
\]
defines a right group action of $\mathrm{Aut}^{\!*}(F,A)$ on $F\times A$.
\end{lemma}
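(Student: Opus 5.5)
The plan is to verify the two axioms of a right action directly from Definition~\ref{def:2} and the multiplication rule of Definition~\ref{def:1}: the identity element acts trivially, and for all $g,g'\in\mathrm{Aut}^{\!*}(F,A)$ and $(x,a)\in F\times A$ we have $\bigl((x,a)^{g}\bigr)^{g'}=(x,a)^{gg'}$. Throughout I will use the paper's exponential convention, in which maps are written on the right and composed left to right, so $x^{\alpha\alpha'}=(x^{\alpha})^{\alpha'}$.

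First I will record that $(\mathrm{id}_F,\mathrm{id}_A,0)$ is the identity of $\mathrm{Aut}^{\!*}(F,A)$. This follows from the multiplication formula, since $\alpha\cdot 0+0\cdot\beta'=0$ and $\mathrm{id}\,\tau'+\tau\,\mathrm{id}=\tau'$, and similarly on the other side. Under the rule of Definition~\ref{def:2} it sends $(x,a)$ to $(x,a+0)=(x,a)$. I will also note that the product stays inside the group: its third component vanishes at $1$ because $1^{\alpha}=1$ and $\tau(1)=\tau'(1)=0$. Definition~\ref{def:1} already asserts that $\mathrm{Aut}^{\!*}(F,A)$ is a group, so I will only point to this rather than reprove it.

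The main step is the compatibility computation. Let $g=(\alpha,\beta,\tau)$ and $g'=(\alpha',\beta',\tau')$. Applying $g$ and then $g'$ gives
\[
\bigl(x^{\alpha\alpha'},\ (a^{\beta}+\tau(x))^{\beta'}+\tau'(x^{\alpha})\bigr).
\]
Because $\beta'$ is an automorphism of the additive group $A$, the second coordinate equals $a^{\beta\beta'}+\tau(x)^{\beta'}+\tau'(x^{\alpha})$. On the other side, $gg'=(\alpha\alpha',\beta\beta',\alpha\tau'+\tau\beta')$, and by the definitions $(\alpha\tau')(x)=\tau'(x^{\alpha})$ and $(\tau\beta')(x)=\tau(x)^{\beta'}$. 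So $(x,a)^{gg'}=\bigl(x^{\alpha\alpha'},\ a^{\beta\beta'}+\tau'(x^{\alpha})+\tau(x)^{\beta'}\bigr)$. The two expressions agree because $A$ is abelian.

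I do not expect a genuine obstacle here. The only delicate point is keeping the left-to-right conventions consistent. In particular, $\alpha\tau'$ must mean ``first $\alpha$, then $\tau'$'', which is exactly how Definition~\ref{def:1} defines it, and this is what makes the third component match. The additivity of $\beta'$ is the one structural fact that is used. As a consequence, each $g$ acts as a bijection of $F\times A$, with inverse given by the action of $g^{-1}$.
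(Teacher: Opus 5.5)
Your proposal is correct and follows the paper's proof essentially verbatim. Both check that $(\mathrm{id}_F,\mathrm{id}_A,0)$ acts trivially, then expand $\bigl((x,a)^{g}\bigr)^{g'}$ and $(x,a)^{gg'}$ with the product rule of Definition~\ref{def:1} and compare; your explicit appeal to the additivity of $\beta'$ and the commutativity of $A$ spells out what the paper leaves implicit. One small slip: in your side remark on why $(\mathrm{id}_F,\mathrm{id}_A,0)$ is the group identity, the relevant identities are $\alpha 0+\tau\,\mathrm{id}_A=\tau$ and $\mathrm{id}_F\tau'+0\beta'=\tau'$, but that fact is not needed for the lemma.
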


\begin{proof}
The identity element of $\mathrm{Aut}^{\!*}(F,A)$ is $ e=(\mathrm{id}_F,\mathrm{id}_A,0)$
where $0:F\to A$ is the zero map. For every $(x,a)\in F\times A$ we have
\[
(x,a)^e
=
\bigl(x^{\mathrm{id}_F},\,a^{\mathrm{id}_A}+0(x)\bigr)
=
(x,a)
\]
Let $g=(\alpha, \beta, \tau)$ and $g'=(\alpha', \beta', \tau')$ be the elements of $\mathrm{Aut}^{\!*}(F,A)$ and let $(x,a)\in F\times A$, then 

\[(x,a)^g
=\bigl(x^{\alpha},\,a^{\beta}+\tau(x)\bigr).
\]
and 
\begin{align*}
\bigl((x,a)^g\bigr)^{g'}
&=
\bigl((x^{\alpha})^{\alpha'},\, (a^{\beta}+\tau(x))^{\beta'}+\tau'(x^{\alpha})\bigr) \\
&=
\bigl(x^{\alpha\alpha'},\, a^{\beta\beta'}+(\tau(x))^{\beta'}+\tau'(x^{\alpha})\bigr).
\end{align*}
On the other hand, from Definition ~\ref{def:1} the product in $\mathrm{Aut}^{\!*}(F,A)$ is
\[
gg'
=
(\alpha,\beta,\tau)(\alpha',\beta',\tau')
=
(\alpha\alpha',\ \beta\beta',\ \alpha\tau'+\tau\beta'),
\]
where
\[
(\alpha\tau')(x)=\tau'(x^{\alpha}),
\qquad
(\tau\beta')(x)=(\tau(x))^{\beta'}.
\]
Therefore
\[
(x,a)^{gg'}
=\bigl(x^{\alpha\alpha'},\ a^{\beta\beta'}+(\alpha\tau'+\tau\beta')(x)\bigr)
=
\bigl(x^{\alpha\alpha'},\ a^{\beta\beta'}+(\tau(x))^{\beta'}+\tau'(x^{\alpha})\bigr).
\]
hence, 
\[
\bigl((x,a)^g\bigr)^{g'}=(x,a)^{gg'}. \qedhere
\]
\end{proof}

For $\theta \in C(F,A)$ and $(\alpha,\beta,\tau) \in \mathrm{Aut}^{\!*}(F,A)$ we define 
\begin{align} \label{eq:Astar-action}
\theta^{(\alpha,\beta,\tau)}
=
\theta^{(\alpha,\beta)} + (\partial{\tau})^{\alpha}.
\end{align}
Explicitly,
\[
\bigl(\theta^{(\alpha,\beta,\tau)}\bigr)(x,y)
=
\theta(x^{\alpha^{-1}},y^{\alpha^{-1}})^{\beta}
+
\tau((xy)^{\alpha^{-1}})
-\tau(x^{\alpha^{-1}})
-\tau(y^{\alpha^{-1}}).
\]
This extends the equivalence transformations of Daly and Vojt\v{e}chovsk\'y by combining automorphism twisting and coboundary translation. The following lemma shows the affine property of this map.

\begin{lemma} \label{lem:why-affine}
Let $\theta_1,\ldots,\theta_m\in C(F,A)$, and $k_1,\ldots,k_m$ integers with $k_1+\cdots+k_m=1$. Then
\[(k_1\theta_1+\cdots+k_m\theta_m)^{(\alpha,\beta,\tau)} = k_1\theta_1^{(\alpha,\beta,\tau)}+\cdots+k_m\theta_m^{(\alpha,\beta,\tau)}\]
\end{lemma}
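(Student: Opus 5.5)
The plan is to split the map $\theta\mapsto\theta^{(\alpha,\beta,\tau)}$ into its two summands from \eqref{eq:Astar-action}: a linear part $\theta\mapsto\theta^{(\alpha,\beta)}$ and a constant translation by $(\partial\tau)^{\alpha}$. Affine-combination compatibility will then follow from linearity of the first part together with the hypothesis $k_1+\cdots+k_m=1$, which absorbs the translation.

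The first step is to check that $\theta\mapsto\theta^{(\alpha,\beta)}$ is additive on $C(F,A)$, and hence $\mathbb{Z}$-linear. For fixed $(x,y)\in F\times F$ we have
\[
(\theta_1+\theta_2)^{(\alpha,\beta)}(x,y)=\bigl(\theta_1(x^{\alpha^{-1}},y^{\alpha^{-1}})+\theta_2(x^{\alpha^{-1}},y^{\alpha^{-1}})\bigr)^{\beta},
\]
and this splits because $\beta\in\mathrm{Aut}(A)$ is a group homomorphism of the abelian group $A$. By induction and negation we get $(k_1\theta_1+\cdots+k_m\theta_m)^{(\alpha,\beta)}=\sum_i k_i\theta_i^{(\alpha,\beta)}$ for arbitrary integers $k_i$. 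We should also note that $C(F,A)$ is closed under integer combinations, since the normalization conditions are linear, and that $\theta^{(\alpha,\beta)}$ is again normalized because $\alpha$ fixes $1$.

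The second step handles the translation. The term $(\partial\tau)^{\alpha}$ does not depend on $\theta$, so
\[
(\textstyle\sum_i k_i\theta_i)^{(\alpha,\beta,\tau)}=\sum_i k_i\theta_i^{(\alpha,\beta)}+(\partial\tau)^{\alpha}.
\]
On the other hand,
\[
\sum_i k_i\theta_i^{(\alpha,\beta,\tau)}=\sum_i k_i\theta_i^{(\alpha,\beta)}+\Bigl(\sum_i k_i\Bigr)(\partial\tau)^{\alpha}.
\]
Since $\sum_i k_i=1$, the two expressions coincide, which proves the lemma.

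There is no real obstacle here. The only point needing care is that integer scalar multiples are well defined in the abelian group $A$, so the computation is valid even when $A$ is not elementary abelian, and that $\beta$ commutes with these multiples, as any group homomorphism does.
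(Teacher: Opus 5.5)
Your proposal is correct and follows essentially the same route as the paper: additivity of $\theta\mapsto\theta^{(\alpha,\beta)}$ (which you justify explicitly via $\beta$ being a homomorphism), with the $\theta$-independent translation $(\partial\tau)^{\alpha}$ absorbed using $k_1+\cdots+k_m=1$. Your additional checks that $C(F,A)$ is closed under integer combinations and that $\theta^{(\alpha,\beta)}$ stays normalized are small details the paper leaves implicit.
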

\begin{proof} 
Set $\Theta=k_1\theta_1+\ldots+k_m\theta_m$. Since the map $\theta\mapsto \theta^{(\alpha,\beta)}$ is a group homomorphism of the abelian group $C(F, A)$, it is additive and commutes with integer scalar multiplication.
Hence
\[
\Theta^{(\alpha,\beta)}=
(k_1\theta_1+\ldots+k_m\theta_m)^{(\alpha,\beta)}
=(k_1\theta_1)^{(\alpha,\beta)}+\ldots+(k_m\theta_m)^{(\alpha,\beta)}.
\]
Then by formula \eqref{eq:Astar-action},
\begin{align*}
\Theta^{(\alpha,\beta,\tau)}
&=
(k_1\theta_1)^{(\alpha,\beta)}+\ldots+(k_m\theta_m)^{(\alpha,\beta)}+(\partial{\tau})^{\alpha}\\
&=
k_1\theta_1^{(\alpha,\beta)}+\ldots+k_m\theta_m^{(\alpha,\beta)}+(k_1+\ldots+k_m)(\partial{\tau})^{\alpha}\\
&=
k_1\bigl(\theta_1^{(\alpha,\beta)}+(\partial\tau)^{\alpha}\bigr)+\ldots+k_m\bigl(\theta_m^{(\alpha,\beta)}+(\partial\tau)^{\alpha}\bigr)\\
&=k_1\theta_1^{(\alpha,\beta,\tau)}+\cdots+k_m\theta_m^{(\alpha,\beta,\tau)}.
\qedhere
\end{align*}
\end{proof}

\begin{lemma} \label{lem:affine-action-on-cocycles}
The assignment
\[
C(F,A)\times\mathrm{Aut}^{\!*}(F,A)\to C(F,A),\qquad
(\theta, (\alpha,\beta,\tau))\mapsto\theta^{(\alpha,\beta,\tau)},
\]
given in formula \eqref{eq:Astar-action}, defines a right group action of
$\mathrm{Aut}^{\!*}(F,A)$ on the set of cocycles $C(F,A)$.
\end{lemma}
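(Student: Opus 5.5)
The plan is to verify the three requirements of a right action directly from formula \eqref{eq:Astar-action}: that $\theta^{(\alpha,\beta,\tau)}$ lies in $C(F,A)$, that the identity acts trivially, and that $(\theta^{g})^{g'}=\theta^{gg'}$. Throughout I use the right-action conventions $x^{\alpha\alpha'}=(x^{\alpha})^{\alpha'}$, hence $x^{(\alpha\alpha')^{-1}}=x^{\alpha'^{-1}\alpha^{-1}}$. I also use the notation $\phi^{\alpha}(x,y)=\phi(x^{\alpha^{-1}},y^{\alpha^{-1}})$ for any $\phi:F\times F\to A$, so that $\phi^{(\alpha,\beta)}=(\phi^{\alpha})^{\beta}$ is applied pointwise.

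\emph{Well-definedness.} The term $\theta^{(\alpha,\beta)}$ is normalized because $1^{\alpha^{-1}}=1$, $\theta$ is normalized, and $0^\beta=0$. The term $\partial\tau$ is normalized because $\tau(1)=0$ gives $\partial\tau(1,y)=\tau(y)-\tau(1)-\tau(y)=0$, and symmetrically $\partial\tau(x,1)=0$. Twisting by $\alpha$ preserves normalization, and a sum of normalized maps is normalized. \emph{Identity.} For $e=(\mathrm{id},\mathrm{id},0)$ we have $\partial 0=0$, so $\theta^{e}=\theta$.

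\emph{Compatibility.} Let $g=(\alpha,\beta,\tau)$ and $g'=(\alpha',\beta',\tau')$. First I record that $\theta\mapsto\theta^{(\alpha,\beta)}$ is an additive right action of $\mathrm{Aut}(F)\times\mathrm{Aut}(A)$; this is a one-line check, as used by Daly and Vojt\v{e}chovsk\'y. Expanding then gives
\[
(\theta^{g})^{g'}=\theta^{(\alpha\alpha',\beta\beta')}+\bigl((\partial\tau)^{\alpha}\bigr)^{(\alpha',\beta')}+(\partial\tau')^{\alpha'},
\]
while
\[
\theta^{gg'}=\theta^{(\alpha\alpha',\beta\beta')}+\bigl(\partial(\alpha\tau'+\tau\beta')\bigr)^{\alpha\alpha'}.
\]
Since $\partial$ is additive, it remains to establish two identities:
\[
\bigl(\partial(\tau\beta')\bigr)^{\alpha\alpha'}=\bigl((\partial\tau)^{\alpha}\bigr)^{(\alpha',\beta')},
\qquad
\bigl(\partial(\alpha\tau')\bigr)^{\alpha\alpha'}=(\partial\tau')^{\alpha'}.
\]
For the first, $\beta'$ is additive, so $\partial(\tau\beta')=(\partial\tau)^{\beta'}$ pointwise, and both sides evaluate to $\partial\tau(x^{\alpha'^{-1}\alpha^{-1}},y^{\alpha'^{-1}\alpha^{-1}})^{\beta'}$. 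For the second, $\alpha$ is a loop automorphism, so $(xy)^{\alpha}=x^{\alpha}y^{\alpha}$. This gives $\partial(\alpha\tau')(x,y)=\partial\tau'(x^{\alpha},y^{\alpha})$. Substituting $x^{\alpha'^{-1}\alpha^{-1}}$ then cancels $\alpha^{-1}\alpha$ and leaves $\partial\tau'(x^{\alpha'^{-1}},y^{\alpha'^{-1}})$, as required.

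The only delicate point, and where I expect errors to creep in, is bookkeeping of the composition order: the inverse in $x^{(\alpha\alpha')^{-1}}$ and the convention $(\alpha\tau')(x)=\tau'(x^{\alpha})$ from Definition~\ref{def:1}. To keep this under control I would write every term explicitly as a function of $(x,y)$ before comparing. The essential input from the hypotheses is that $\alpha$ is a multiplicative automorphism, so that it commutes with $\partial$, and that $\beta'$ is additive.
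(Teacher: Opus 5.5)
Your proposal is correct and follows essentially the same route as the paper: verify the identity, then expand $(\theta^{g})^{g'}$ directly. In both arguments, additivity of $\beta'$ and multiplicativity of $\alpha$ match the translation term with $\partial(\alpha\tau'+\tau\beta')$ twisted by $\alpha\alpha'$. Your split into the two separate identities for $\tau\beta'$ and $\alpha\tau'$, and your explicit check that $\theta^{g}$ is again normalized (which the paper leaves implicit), only organize the same computation more cleanly.
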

\begin{proof}
Since $e=(\mathrm{id}_F,\mathrm{id}_A,0)$ as above, by \eqref{eq:Astar-action},
\[
\theta^{e}
=
\theta^{(\mathrm{id}_F,\mathrm{id}_A)}+\partial 0
=
\theta+0
=
\theta.
\]

Let $g=(\alpha,\beta,\tau)$ and $h=(\gamma,\delta,\sigma)$ be elements of $\mathrm{Aut}^{\!*}(F,A)$. 
By \eqref{eq:Astar-action}, 
\[
\theta^{g}
=\theta^{(\alpha,\beta)} + (\partial{\tau})^{\alpha}.
\]
That is,
\[
\bigl(\theta^{(\alpha,\beta,\tau)}\bigr)(x,y)
=
\theta(x^{\alpha^{-1}},y^{\alpha^{-1}})^{\beta}
+
\tau((xy)^{\alpha^{-1}})
-\tau(x^{\alpha^{-1}})
-\tau(y^{\alpha^{-1}}).
\]

\begin{align*}
(\theta^{g})^{h}
&=
\bigl(\theta^{(\alpha,\beta,\tau)}\bigr)^{(\gamma,\delta,\sigma)}(x,y)
\\
&=
\Bigl(
 \theta(x^{\alpha^{-1}},y^{\alpha^{-1}})^{\beta}
 + \tau(x^{\alpha^{-1}} y^{\alpha^{-1}})
 - \tau(x^{\alpha^{-1}})
 - \tau(y^{\alpha^{-1}})
\Bigr)^{\delta}
\bigl(x^{\gamma^{-1}},y^{\gamma^{-1}}\bigr)
\\
&\qquad
+\,\sigma\bigl((xy)^{\gamma^{-1}}\bigr)
- \sigma\bigl(x^{\gamma^{-1}}\bigr)
- \sigma\bigl(y^{\gamma^{-1}}\bigr)
\\
&=
\theta\bigl(x^{(\alpha\gamma)^{-1}},y^{(\alpha\gamma)^{-1}}\bigr)^{\beta\delta}
\;+\;
\tau\bigl((xy)^{(\alpha\gamma)^{-1}}\bigr)^{\delta}
- \tau\bigl(x^{(\alpha\gamma)^{-1}}\bigr)^{\delta}
- \tau\bigl(y^{(\alpha\gamma)^{-1}}\bigr)^{\delta}
\\
&\qquad
+\,\sigma\bigl((xy)^{\gamma^{-1}}\bigr)
- \sigma\bigl(x^{\gamma^{-1}}\bigr)
- \sigma\bigl(y^{\gamma^{-1}}\bigr)
\\
&=
\theta\bigl(x^{(\alpha\gamma)^{-1}},y^{(\alpha\gamma)^{-1}}\bigr)^{\beta\delta}
\;+\;
(\tau\delta)\bigl((xy)^{(\alpha\gamma)^{-1}}\bigr)
- (\tau\delta)\bigl(x^{(\alpha\gamma)^{-1}}\bigr)
- (\tau\delta)\bigl(y^{(\alpha\gamma)^{-1}}\bigr)
\\
&\qquad
+\,(\alpha\sigma)\bigl((xy)^{(\alpha\gamma)^{-1}}\bigr)
- (\alpha\sigma)\bigl(x^{(\alpha\gamma)^{-1}}\bigr)
- (\alpha\sigma)\bigl(y^{(\alpha\gamma)^{-1}}\bigr)
\\
&=
\theta\bigl(x^{(\alpha\gamma)^{-1}},y^{(\alpha\gamma)^{-1}}\bigr)^{\beta\delta}
\;+\;
(\tau\delta + \alpha\sigma)\bigl((xy)^{(\alpha\gamma)^{-1}}\bigr)
\\
&\qquad
-\,
(\tau\delta + \alpha\sigma)\bigl(x^{(\alpha\gamma)^{-1}}\bigr)
-\,
(\tau\delta + \alpha\sigma)\bigl(y^{(\alpha\gamma)^{-1}}\bigr)
\\
&=
\bigl(\theta^{(\alpha\gamma,\beta\delta,\;\alpha\sigma+\tau\delta)}\bigr)(x,y)
\\
&=
\theta^{(\alpha\gamma,\beta\delta)}(x,y)
\;+\;
\partial(\alpha\sigma+\tau\delta)(x,y)
\\
&=
\theta^{gh}(x,y). \qedhere
\end{align*}
\end{proof}

Now, we present the affine version of \cite[Lemma 3.1]{DalyVojtechovsky2009} by showing that $\mathrm{Aut}^{\!*}(F,A)$-equivalent cocycles correspond to isomorphic loops. 

\begin{lemma} \label{lemma:1}
Let $(\alpha,\beta,\tau)\in\mathrm{Aut}^{\!*}(F,A)$. 
Then the map given by Definition~\ref{def:2} defines an isomorphism
\[
X(F,A,\theta)
\cong
X(F,A,\theta^{(\alpha,\beta,\tau)}).
\]
\end{lemma}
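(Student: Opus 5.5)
The plan is to verify directly that the map $\varphi\colon F\times A\to F\times A$ of Definition~\ref{def:2},
\[
\varphi(x,a)=(x,a)^{(\alpha,\beta,\tau)}=\bigl(x^{\alpha},\,a^{\beta}+\tau(x)\bigr),
\]
is a bijection and is multiplicative from $X(F,A,\theta)$ to $X(F,A,\theta')$, where $\theta'=\theta^{(\alpha,\beta,\tau)}$. By Lemma~\ref{lem:affine-action-on-cocycles}, $\theta'$ lies in $C(F,A)$, so by Proposition~\ref{prop:cocycles} the target $X(F,A,\theta')$ is indeed a loop. I will then also note that $\varphi$ maps $1\times A$ onto $1\times A$, since $\varphi(1,a)=(1,a^{\beta})$ because $\tau(1)=0$. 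Hence $\varphi$ is an isomorphism of pairs, and not merely of loops.

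Bijectivity follows from the group action already established. The element $(\alpha,\beta,\tau)$ has an inverse in $\mathrm{Aut}^{\!*}(F,A)$, and the action lemma for $F\times A$ shows that acting by the inverse undoes $\varphi$. Alternatively, one can write the inverse explicitly as $(y,c)\mapsto \bigl(y^{\alpha^{-1}},\,(c-\tau(y^{\alpha^{-1}}))^{\beta^{-1}}\bigr)$.

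For multiplicativity, I will expand both sides. On the one hand,
\[
\varphi\bigl((x,a)(y,b)\bigr)=\bigl((xy)^{\alpha},\ a^{\beta}+b^{\beta}+\theta(x,y)^{\beta}+\tau(xy)\bigr),
\]
using additivity of $\beta$. On the other hand, in $X(F,A,\theta')$,
\[
\varphi(x,a)\varphi(y,b)=\bigl(x^{\alpha}y^{\alpha},\ a^{\beta}+\tau(x)+b^{\beta}+\tau(y)+\theta'(x^{\alpha},y^{\alpha})\bigr).
\]
The first coordinates agree because $\alpha$ is an automorphism. For the second coordinates, I will substitute into the explicit formula for $\theta'$ following \eqref{eq:Astar-action}. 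The arguments $x^{\alpha},y^{\alpha}$ are pulled back by $\alpha^{-1}$, and $(x^{\alpha}y^{\alpha})^{\alpha^{-1}}=xy$. This gives
\[
\theta'(x^{\alpha},y^{\alpha})=\theta(x,y)^{\beta}+\tau(xy)-\tau(x)-\tau(y).
\]
The terms $\tau(x)+\tau(y)$ then cancel, and the two sides coincide.

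The only step needing care is this last one: keeping the conventions straight. The right action $\theta^{(\alpha,\beta)}$ and the twist $(\partial\tau)^{\alpha}$ both evaluate at $\alpha^{-1}$-preimages, and this is exactly what makes the coboundary term match $\tau(xy)-\tau(x)-\tau(y)$ in the original coordinates. Beyond that, the verification is routine.
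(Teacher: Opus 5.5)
Your proposal is correct and is essentially the paper's proof: a direct check that $(x,a)\mapsto(x^{\alpha},a^{\beta}+\tau(x))$ is multiplicative, obtained by evaluating the explicit formula for $\theta^{(\alpha,\beta,\tau)}$ at $(x^{\alpha},y^{\alpha})$, together with an explicit inverse. Your inverse $(y,c)\mapsto\bigl(y^{\alpha^{-1}},(c-\tau(y^{\alpha^{-1}}))^{\beta^{-1}}\bigr)$ is the right one, whereas the formula $(x,a)\mapsto(x^{\alpha^{-1}},a^{\beta^{-1}}-\tau(x))$ printed in the paper does not invert the map in general; your remark that $1\times A$ is mapped onto itself is a worthwhile addition.
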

\begin{proof}
Let $\cdot$ and $\ast$ denotes the multiplication in $X(F,A,\theta)$ and $X(F,A,\theta^{(\alpha,\beta,\tau)})$ respectively. Then for $(x,a),(y,b)\in F\times A$, we compute,
\begin{align*}
f\big((x,a)\cdot(y,b)\big)
&= f(xy,\; a+b+\theta(x,y)) \\
&= ((xy)^\alpha,\; \tau(xy)+ a^{\beta} + b^{\beta} +\theta(x,y)^{\beta}).
\end{align*}

Using the definition of the affine action on cocycles,
$(\alpha,\beta,\tau)$ on cocycles,
\[
(\theta^{(\alpha,\beta,\tau)}(x^{\alpha},y^{\alpha}))
= \theta(x,y)^{\beta}+\tau(xy)-\tau(x)-\tau(y).
\]
Hence
\[
(\theta(x,y)^{\beta}) =\theta^{(\alpha,\beta,\tau)}( x^{\alpha},y^{\alpha})
 -\tau(xy)+\tau(x)+\tau(y).
\]

Substituting into the previous expression gives
\begin{align*}
f\big((x,a)\cdot(y,b)\big)
&= ((x^{\alpha})(y^{\alpha}),\; \tau(xy)\!+\! a^{\beta}\!+\! b^{\beta}
 \!+\! \theta^{(\alpha,\beta,\tau)}(x^{\alpha},y^{\alpha}) 
 \!-\!\tau(xy)\!+\!\tau(x)\!+\!\tau(y))\\
&= ((x^{\alpha})(y^{\alpha}),\; \tau(x)+ a^{\beta}+\tau(y)+ b^{\beta}
 +\theta^{(\alpha,\beta,\tau)}(x^{\alpha},y^{\alpha})) \\
&= (x^{\alpha},\tau(x)+ a^{\beta}) \ast(y^{\alpha},\tau(y)+b^{\beta}) \\
&= f(x,a)\ast f(y,b).
\end{align*}

Thus $f$ is a homomorphism. Its inverse is
\[
(x,a)\mapsto(x^{\alpha^{-1}}, a^{\beta^{-1}} -\tau(x)),
\]
Thus, $f$ is bijective and hence an isomorphism.
\end{proof}

In fact, the converse also holds.

\begin{lemma} \label{lemma:2}
Conversely, let 
$f:X(F,A,\theta)\to X(F,A,\mu)$ 
be a loop isomorphism satisfying $f(A)=A$. 
Then $f$ is induced by a triple 
$(\alpha,\beta,\tau)\in\mathrm{Aut}^{\!*}(F,A)$, and
\[
\mu=\theta^{(\alpha,\beta,\tau)}.
\]
\end{lemma}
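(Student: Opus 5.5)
The plan is to read off the triple $(\alpha,\beta,\tau)$ directly from $f$, using only that $f$ maps the central subloop $1\times A$ onto itself. Then Lemma~\ref{lemma:1} and a direct comparison of multiplication rules will identify $\mu$.

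First, since $f(1\times A)=1\times A$ and $f$ is a loop homomorphism, the restriction $a\mapsto f(1,a)$ gives an automorphism of $A$. Writing $f(1,a)=(1,a^{\beta})$ defines $\beta\in\mathrm{Aut}(A)$. The multiplication $(1,a)(1,b)=(1,a+b)$ holds because $\theta$ is normalized, so $\beta$ is additive, and it is bijective because $f$ is.

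Second, $f$ maps cosets of $A$ to cosets of $A$: $f((x,0)A)=f(x,0)f(A)=f(x,0)A$. Hence $f$ induces a bijection $\alpha\colon F\to F$, with $f(x,0)=(x^{\alpha},\tau(x))$ for some map $\tau\colon F\to A$. Since $f(1,0)=(1,0)$, we get $\tau(1)=0$.

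Next, $(x,a)=(x,0)(1,a)$ holds because $\theta(x,1)=0$. Applying $f$ and using the normalization of $\mu$ gives
\[
f(x,a)=(x^{\alpha},\tau(x))(1,a^{\beta})=(x^{\alpha},a^{\beta}+\tau(x)).
\]
So $f$ has exactly the form of Definition~\ref{def:2}. Projecting the homomorphism identity $f((x,0)(y,0))=f(x,0)f(y,0)$ onto the first coordinate shows that $(xy)^{\alpha}=x^{\alpha}y^{\alpha}$. Thus $\alpha\in\mathrm{Aut}(F)$, and $(\alpha,\beta,\tau)\in\mathrm{Aut}^{\!*}(F,A)$.

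Finally, compare the second coordinates in $f((x,0)(y,0))=f(x,0)f(y,0)$. The left side is $f(xy,\theta(x,y))$, with second coordinate $\theta(x,y)^{\beta}+\tau(xy)$. The right side has second coordinate $\tau(x)+\tau(y)+\mu(x^{\alpha},y^{\alpha})$. Therefore
\[
\mu(x^{\alpha},y^{\alpha})=\theta(x,y)^{\beta}+\tau(xy)-\tau(x)-\tau(y).
\]
Substituting $x\mapsto x^{\alpha^{-1}}$ and $y\mapsto y^{\alpha^{-1}}$ turns this into the explicit formula for $\theta^{(\alpha,\beta,\tau)}$ stated after \eqref{eq:Astar-action}. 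The same identity also appears in the proof of Lemma~\ref{lemma:1}, which confirms $\mu=\theta^{(\alpha,\beta,\tau)}$.

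The main obstacle is the second step. There one must check carefully that $f$ genuinely permutes the cosets of $A$; this uses $A\le Z(L)$, or at least normality of $A$. One must also check that the induced map on $F$ is a homomorphism rather than merely a bijection. Both follow from the first-coordinate projection being a surjective homomorphism $X(F,A,\mu)\to F$ with kernel $1\times A$. The remainder is bookkeeping with the normalization conditions.
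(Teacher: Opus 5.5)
Your proof is correct and takes the same approach as the paper. You read off $\beta$ from $f$ on $1\times A$, and $\alpha,\tau$ from $f(x,0)$; then you use $(x,a)=(x,0)(1,a)$ to get the form of $f$ and compare coordinates in $f((x,0)(y,0))=f(x,0)f(y,0)$. Your version is slightly cleaner than the paper's: you establish $f(x,a)=(x^{\alpha},a^{\beta}+\tau(x))$ before using it to evaluate $f(xy,\theta(x,y))$, where the paper only derives it at the end, and you justify that $\alpha$ is bijective, which the paper leaves implicit.
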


\begin{proof}
The underlying set of both loop extensions is $F\times A$. 
Since $f$ is a homomorphism, we have
\[
f:F\times A\longrightarrow F\times A,
\qquad
(x,a)\longmapsto\bigl(u(x,a),\,v(x,a)\bigr).
\]
Because $f(A)=A$, for every $a\in A$ we must have
\[
f(1,a)=(1,a')=\big(u(1,a),v(1,a)\big) \quad\text{for a unique }a'\in A.
\]
Define
\[
a^\beta=v(1,a).
\]
Then
\[
f(1,a)=(1,a^{\beta}).
\]
Using that $f$ is a homomorphism,
\[
f\bigl((1,a)(1,b)\bigr)
=
f(1,a+b)
=
(1,(a+b)^{\beta}),
\]

but also
\[
f(1,a)*f(1,b)=(1,a^{\beta})*(1,b^{\beta})
=(1,a^{\beta}+b^{\beta}).
\]
Hence
\[
(a+b)^{\beta}=a^{\beta}+b^{\beta},
\]
so $\beta$ is an automorphism of $A$.\\
For each $x\in F$ we may write
\[
f(x,0)=(x',c)=(u(x,0),\,v(x,0))
\quad\text{with }x'\in F,\ c\in A.
\]
Define
\[
x^\alpha=u(x,0),\qquad \tau(x)=v(x,0).
\]
Since $f$ preserves the identity element,
\[
f(1,0)=(1,0),
\]
we obtain
\[
1^\alpha=1,\qquad \tau(1)=0.
\]

Now use the homomorphism property on $(x,0)\cdot(y,0)=(xy,\theta(x,y))$.
\begin{align*}
f(xy,\theta(x,y)) &= f((x,0)\cdot(y,0))\\
&= f(x,0)*f(y,0)\\
&= (x^{\alpha},\tau(x))*(y^{\alpha},\tau(y))\\
&=\bigl(x^{\alpha}y^{\alpha},\tau(x)+\tau(y)+\mu(x^{\alpha},y^{\alpha})\bigr).
\end{align*}

also
\begin{align*}
f(xy,\theta(x,y)) &= \big((xy)^{\alpha},\tau(xy)+\theta(x,y)^{\beta}\big) 
\end{align*}

Comparing the first coordinates gives
\[
(xy)^{\alpha}=x^{\alpha}y^{\alpha},
\]
so $\alpha\in\mathrm{Aut}(F)$.

Comparing the second coordinates gives
\[
\mu(x^{\alpha},y^{\alpha})
=
\theta(x,y)^{\beta}
+\tau(xy)-\tau(x)-\tau(y),
\]
that is,
\[
\mu=\theta^{(\alpha,\beta,\tau)}.
\]
In every loop extension one has $(x,a)=(x,0)(1,a)$, apply $f$ on both sides
\begin{align*}
f(x,a)&=f(x,0)*f(1,a)\\
&=(x^{\alpha},\tau(x))*(1,a^{\beta})\\
&=(x^{\alpha},\tau(x)+a^{\beta}+\mu(x^{\alpha},1)).
\end{align*}

Since $\mu(x^{\alpha},1)=0$. Therefore,
\[
f(x,a)=\bigl(x^{\alpha},\,\tau(x)+a^{\beta}\bigr).
\]
Thus the values of $f(x,0)$ and $f(1,a)$ determine $\alpha$, $\beta$, and $\tau$ uniquely.
\end{proof}

\begin{theorem}\label{thm:Q-orbits}
For every loop $F$ and every abelian group $A$, there is a bijection between $\boldQ(F,A)$ and the orbits of $\mathrm{Aut}^{\!*}(F,A)$ on $C(F,A)$. 
\end{theorem}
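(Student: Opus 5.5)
We define a map $\Phi$ from the orbit set $C(F,A)/\mathrm{Aut}^{\!*}(F,A)$ to $\boldQ(F,A)$ by
\[
\Phi\bigl(\theta^{\mathrm{Aut}^{\!*}(F,A)}\bigr)=\bigl[X(F,A,\theta),\,1\times A\bigr].
\]
We then check three things: that $\Phi$ is well defined, that it is surjective, and that it is injective.

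\emph{Well-definedness.} First, the pair really lies in $\boldQ(F,A)$. By Proposition~\ref{prop:cocycles} and the remark following it, $1\times A$ is a central subloop of $X(F,A,\theta)$ isomorphic to $A$. The quotient $X(F,A,\theta)/(1\times A)$ is identified with $F$ via $(x,a)(1\times A)\mapsto x$; this is a homomorphism because the first coordinate of a product is $xy$. Second, $\Phi$ does not depend on the orbit representative. If $\mu=\theta^{(\alpha,\beta,\tau)}$, then Lemma~\ref{lemma:1} gives the isomorphism $f(x,a)=(x^\alpha,a^\beta+\tau(x))$. Since $f(1,a)=(1,a^\beta)$ (using $\tau(1)=0$), $f$ maps $1\times A$ onto $1\times A$. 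Hence $[X(F,A,\theta),1\times A]=[X(F,A,\mu),1\times A]$.

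\emph{Surjectivity.} Take $[L,A']\in\boldQ(F,A)$ with $A'\le Z(L)$, $A'\cong A$ and $L/A'\cong F$. Fix isomorphisms $\iota:A\to A'$ and $\varphi:F\to L/A'$. The converse part of Proposition~\ref{prop:cocycles} gives a cocycle $\theta'\in C(L/A',A')$ and an isomorphism $L\cong X(L/A',A',\theta')$. Its construction via coset representatives sends $A'$ onto $1\times A'$, and we need this property, so we note it explicitly. Transporting along $\varphi$ and $\iota$, set
\[
\theta(x,y)=\iota^{-1}\bigl(\theta'(\varphi(x),\varphi(y))\bigr)\in C(F,A).
\]
The map $(x,a)\mapsto(\varphi(x),\iota(a))$ is then an isomorphism of pairs $(X(F,A,\theta),1\times A)\cong(X(L/A',A',\theta'),1\times A')$. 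Composing gives $[L,A']=\Phi(\theta)$.

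\emph{Injectivity.} Suppose $\Phi(\theta)=\Phi(\mu)$. Then there is a loop isomorphism $f:X(F,A,\theta)\to X(F,A,\mu)$ with $f(1\times A)=1\times A$. Lemma~\ref{lemma:2} shows that $f$ is induced by some $(\alpha,\beta,\tau)\in\mathrm{Aut}^{\!*}(F,A)$ and that $\mu=\theta^{(\alpha,\beta,\tau)}$. So $\theta$ and $\mu$ lie in the same orbit, which is well defined as an orbit because Lemma~\ref{lem:affine-action-on-cocycles} shows $\mathrm{Aut}^{\!*}(F,A)$ acts on $C(F,A)$.

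The only delicate step is surjectivity, specifically keeping track of the identifications. Elements of $\boldQ(F,A)$ are abstract pairs in which the subloop is merely isomorphic to $A$ and the quotient merely isomorphic to $F$. We must check that the isomorphism supplied by Proposition~\ref{prop:cocycles}, combined with the transport along $\varphi$ and $\iota$, carries the distinguished subloop exactly onto $1\times A$. This is needed because isomorphisms of pairs are required to preserve the subloop. Once that is verified, the remaining steps follow directly from Lemmas~\ref{lemma:1} and~\ref{lemma:2}.
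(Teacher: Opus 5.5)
Your proof is correct and takes the same route as the paper: Lemma~\ref{lemma:1} shows that cocycles in one orbit give isomorphic pairs, and Lemma~\ref{lemma:2} shows that an isomorphism of pairs puts the two cocycles in one orbit. The paper's short argument leaves implicit the points you spell out, namely that the isomorphism of Lemma~\ref{lemma:1} maps $1\times A$ onto itself because $\tau(1)=0$, and that every $[L,A']$ is realized by some $X(F,A,\theta)$ via Proposition~\ref{prop:cocycles} and transport along $\varphi,\iota$.
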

\begin{proof}
By Lemma~\ref{lemma:1}, each element
$(\alpha,\beta,\tau)\in\mathrm{Aut}^{\!*}(F,A)$ induces an isomorphism $X(F,A,\theta)\cong X(F,A,\theta^{(\alpha,\beta,\tau)})$,
hence all cocycles in the same $\mathrm{Aut}^{\!*}(F,A)$-orbit determine isomorphic extensions. 
Conversely, Lemma~\ref{lemma:2} shows that every isomorphism
$X(F,A,\theta)\cong X(F,A,\mu)$ with $f(A)=A$ is induced by some $(\alpha,\beta,\tau)\in\mathrm{Aut}^{\!*}(F,A)$, and then $\mu=\theta^{(\alpha,\beta,\tau)}$. 
Thus the isomorphism classes $[L,A]\in \boldQ(F,A)$ are in bijection with the orbits of $\mathrm{Aut}^{\!*}(F,A)$ on $C(F,A)$.
\end{proof}

\section{Orbit counting by Burnside's lemma}\label{sec:burnside}

Theorem~\ref{thm:Q-orbits} shows that for finite $F$ and $A$, $|\boldQ(F,A)|$ is equal to the number of orbits of $\mathrm{Aut}^{\!*}(F,A)$ on $C(F,A)$. We can compute the number of these orbits using Burnside's lemma \cite{rotman2012introduction}, which expresses the number of orbits as a sum of fixed point counts. For a finite group $G$ acting on a finite set $X$, one has
\[
 |X/G| \;=\; \frac{1}{|G|} \sum_{g\in G} | \mathrm{Fix}(g)|
\]
where $\mathrm{Fix}(g)=\{x\in X \mid x^{g}=x\}$ is the fixed point set of $g$. The function $g\mapsto |\mathrm{Fix}(g)|$ is constant on every conjugacy class of $G$. Hence, Burnside's lemma reduces to
\[
|C(F,A)\,/\,G|
=
\frac{1}{|G|}
\sum_{C\in\mathrm{Conj}(G)}
|C|\cdot |\mathrm{Fix}(g_C)|,
\]
where $g_{C}$ is any representative of the conjugacy class $C$. Summing over the conjugacy classes yields the number of distinct orbits.

From a practical point of view, for small $F$ and $A$, the group $G=\mathrm{Aut}^{\!*}(F, A)$ and its conjugacy classes are easily computable. However, the cocycle space $C(F, A)$ can be huge even for small $F$ and $A$, and the computation of $|\mathrm{Fix}(g)|$ can be challenging. If $A=C_p^d$ is elementary abelian, then we can control the situation as follows. Since $A$ is a vector space over $\mathbb{F}_p$, so does $C(F,A)$. By Lemma \ref{lem:why-affine}, the action of $G$ is affine over $\mathbb{F}_p$, and for $g\in G$, the set $\mathrm{Fix}(g)$ is an affine subspace. Linear algebra methods can compute their affine rank; see Appendix \ref{app:matrix}. If $|C(F, A)|=|A|^{(|F|-1)^2}$ is not very large, then the implementation can also compute the orbit representatives. Recently, determining orbit representatives is feasible for $|C(F, A)|\leq 10^6$ on a personal computer using GAP \cite{GAP4} and the LOOPS package \cite{nagy2007loops}.

\section{Computing exact extensions}\label{sec:exact-extensions}

In this section, we present a useful method to speed up the calculation of the number $|\boldQexact(F, A)|$ of exact extensions. In the whole section, $A=C_p^d$ is an elementary abelian group of order $p^d$. In the notation $\boldQ(F,A)$, $\boldQexact(F,A)$, $\mathrm{Aut}^{\!*}(F,A)$, $C(F,A)$, we will replace $A$ by the prime power $p^d$: $\boldQ(F,p^d)$, $\boldQexact(F,p^d)$, $\mathrm{Aut}^{\!*}(F,p^d)$, $C(F,p^d)$. Any central extension $L$ of $F$ by $A$ has underlying set $F\times A$. The loop $L$ is represented by a cocycle $\theta \in C(F,p^d)$; $L=X(F,p^d,\theta)$. The surjective homomorphism $\phi:L\to F\cong L/A$ is given by $(x,a) \mapsto x$. For $x\in F$, we write $\bar{x}=(x,0)$. The full preimage of $x$ is $\phi^{-1}(x)=\{(x,a) \mid a\in A\}=\bar x A$. 

Lemma~\ref{lem:nonexact} shows that $L$ is nonexact if and only if there exists $z\in Z_{p}(F)$ such that the full preimage $\phi^{-1}(z)$ is contained in $Z_{p}(L)$. Similarly, $L$ is stretched towards the prime $r\neq p$ in and only if there is a $z\in Z_r(F)$ such that $\phi^{-1}(z)$ is contained in $Z(L)$. 

\begin{lemma}
Let $L=X(F,p^d,\theta)$.
\begin{enumerate}[(i)]
\item The element $(z,a)$ is in $Z(L)$ if and only $z\in Z(F)$ and
\begin{equation}\label{eq:theta-central}
\begin{aligned}
 \theta(x,z) &= \theta(z,x),\\
 \theta(x,y) + \theta(xy,z) &= \theta(y,z) + \theta(x,yz),\\
 \theta(x,z) + \theta(xz,y) &= \theta(z,y) + \theta(x,zy),\\
 \theta(z,x) + \theta(zx,y) &= \theta(x,y) + \theta(z,xy)\\
\end{aligned}
\end{equation}
hold for all $x,y\in F$. 
\item The element $(z,a)$ is in $Z_p(L)$ if and only $z\in Z_p(F)$ and
\begin{equation}\label{eq:theta-order-p}
 \theta(z,z) + \theta(z^{2},z) + \cdots + \theta(z^{p-1},z) = 0.
\end{equation}
\end{enumerate}
\end{lemma}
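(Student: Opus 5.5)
We prove both parts by translating the defining conditions of $Z(L)$ and of element order directly into equations on $\theta$, using the multiplication rule of Proposition~\ref{prop:cocycles}. For (i), write $u=(z,a)$ and take arbitrary elements $(x,b),(y,c)\in L$. The center consists of the elements that commute with everything and lie in all three nuclei. So we check four identities: $u(x,b)=(x,b)u$, and the three associativity identities with $u$ placed in the right, middle and left position.

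\textbf{Commutation.} Expanding $u(x,b)$ and $(x,b)u$ gives first coordinates $zx$ and $xz$, and second coordinates $a+b+\theta(z,x)$ and $b+a+\theta(x,z)$. So commutation holds exactly when $zx=xz$ and $\theta(x,z)=\theta(z,x)$.

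\textbf{Associativity.} Expanding $((x,b)(y,c))u$ and $(x,b)((y,c)u)$ gives first coordinates $(xy)z$ and $x(yz)$. The second coordinates are $b+c+a+\theta(x,y)+\theta(xy,z)$ and $b+c+a+\theta(y,z)+\theta(x,yz)$. The elements $a,b,c$ cancel, and we get the second line of \eqref{eq:theta-central} together with $(xy)z=x(yz)$ in $F$. The middle position gives $(xz)y=x(zy)$ and the third line. The left position gives $(zx)y=z(xy)$ and the fourth line.

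Conversely, the first coordinates of these identities, taken over all $x,y$, say precisely that $z$ commutes and associates with every element of $F$, i.e.\ $z\in Z(F)$. Once $z\in Z(F)$, the first coordinates agree and the second-coordinate conditions are exactly \eqref{eq:theta-central}. Every step is an equivalence, and the conditions do not involve $a$, which proves (i). The only care needed is matching each bracketing to the correct line of \eqref{eq:theta-central}.

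For (ii), we read $Z_p(F)$ as the central elements of order dividing $p$, and assume $(z,a)\in Z(L)$, so that $z\in Z(F)$ by (i). The central elements form an abelian group, so powers of $u=(z,a)$ are unambiguous. By induction on $k$,
\[
u^k=\bigl(z^k,\; ka+\theta(z,z)+\theta(z^2,z)+\cdots+\theta(z^{k-1},z)\bigr),
\]
using $u^{k+1}=u^k u$ and the multiplication rule. At $k=p$ we have $pa=0$ because $A$ is elementary abelian. Hence $u^p=1$ if and only if $z^p=1$ and \eqref{eq:theta-order-p} holds.

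The main subtlety is the reading of (ii): it has to be understood with the centrality conditions of (i) assumed, since $Z_p(L)\subseteq Z(L)$. So the precise statement is that $(z,a)\in Z_p(L)$ if and only if $z\in Z_p(F)$, \eqref{eq:theta-central} holds, and \eqref{eq:theta-order-p} holds. We state it this way explicitly, since \eqref{eq:theta-order-p} alone only controls the order of the element.
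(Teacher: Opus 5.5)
Your proof is correct and is essentially the paper's argument: the paper simply cites \cite[Section 8]{DalyVojtechovsky2009} for (i), which is the same coordinatewise expansion you carry out, and proves (ii) by the same induction on powers (written there only for $\bar z=(z,0)$, while your extra $ka$ term together with $pa=0$ handles arbitrary $(z,a)$). Your insistence that (ii) be read with \eqref{eq:theta-central} included is also justified, and it is exactly how the paper uses the condition when defining $C(F,p^d,z)$.
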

\begin{proof}
(i) is given in \cite[Section 8]{DalyVojtechovsky2009}. (ii) follows from the general formula
\[
\bar z^k=(z^k, \theta(z,z)+\theta(z^2,z)+\cdots+\theta(z^{k-1},z)),
\]
which can be shown by induction on $k$. 
\end{proof}

\begin{definition}\label{def:nonexact-cocycle}
Let $F$ be a loop, $d$ a positive integer, $p,r$ primes and $z\in Z_r(F)$. Define the subspace
\begin{align*}
C(F,p^{d},z) =
\begin{cases}
\bigl\{\theta\in C(F,p^{d})\mid \text{\eqref{eq:theta-central} and \eqref{eq:theta-order-p} hold for all $x,y\in F$} \bigr\} & \text{if $p=r$}, \\
\bigl\{\theta\in C(F,p^{d})\mid \text{\eqref{eq:theta-central} hold for all $x,y\in F$} \bigr\} & \text{if $p\neq r$}
\end{cases}
\end{align*}
of cocycles. 
\end{definition}

Notice that if $z^i\neq 1$, then $C(F,p^{d},z)=C(F,p^{d},z^i)$. In fact, $C(F,p^{d},z)$ only depends on $F$, $p^d$ and the central subgroup $\langle z \rangle$ of prime order. We also remark that the cocycle spaces $C(F,p^d,z)$ and the action of $\mathrm{Aut}^{\!*}(F,p^{d})$ on these subspaces are computationally well tractable.

\begin{lemma} \label{lem:restricted-ext}
For every $(\alpha,\beta,\tau)\in\mathrm{Aut}^{\!*}(F,p^{d})$
\begin{enumerate}[(i)]
 \item $C(F,p^{d},z)$ is an $\mathbb{F}_{p}$-linear subspace of $C(F,p^{d})$, containing all coboundaries $\partial \tau(x,y)$.
 \item $C(F,p^{d},z)^{(\alpha,\beta,\tau)}=C(F,p^{d},z^{\alpha})$.
 \item The group $\mathrm{Aut}^{\!*}(F,p^{d})$ acts on $\bigcup\limits_{z \in Z_{p}(F)} C(F, p^{d}, z)$.
 \item The group
 \[\mathrm{Aut}^{\!*}(F,p^d,z)=\{(\alpha,\beta,\tau) \in \mathrm{Aut}^{\!*}(F,p^d) \mid \langle z \rangle=\langle z^\alpha \rangle\}\]
 has a natural affine action on $C(F,p^d,z)$. 
\end{enumerate}
\end{lemma}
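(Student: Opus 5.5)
The plan is to derive all four items from the structural description of $C(F,p^d,z)$ as an intersection of linear conditions. I will also use the isomorphism of Lemma~\ref{lemma:1}, which transports centre elements.

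For (i), note that each condition in \eqref{eq:theta-central} and \eqref{eq:theta-order-p} is a homogeneous $\mathbb{F}_p$-linear equation in the values of $\theta$. Every term is a single evaluation $\theta(u,v)$ with coefficient $\pm1$. Hence the solution set is a linear subspace of $C(F,p^d)$.

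For coboundaries, I would check the equations directly for $\theta=\partial\tau$ with $z\in Z(F)$. The four equations in \eqref{eq:theta-central} reduce, after expanding $\partial\tau(u,v)=\tau(uv)-\tau(u)-\tau(v)$, to identities of the form $\tau(xz)=\tau(zx)$, $\tau((xy)z)=\tau(x(yz))$, and so on. These hold because $z$ commutes and associates with everything in $F$.

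For \eqref{eq:theta-order-p} with $z\in Z_p(F)$, the sum telescopes:
\[
\sum_{k=1}^{p-1}\bigl(\tau(z^{k+1})-\tau(z^k)-\tau(z)\bigr)=\tau(z^p)-\tau(z)-(p-1)\tau(z)=-p\,\tau(z)=0 .
\]
Here we use $z^p=1$ and $\tau(1)=0$.

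A cleaner alternative is to argue conceptually, and I would present that instead. By the preceding lemma, $\theta\in C(F,p^d,z)$ holds iff the coset $\bar zA$ lies in $Z(L)$ (resp.\ $Z_p(L)$) for $L=X(F,p^d,\theta)$. Now $X(F,p^d,\partial\tau)\cong X(F,p^d,0)=F\times A$ via the map of Lemma~\ref{lemma:1}, and this map covers the identity on $F$. In $F\times A$ the coset over a central element $z$ of order $p$ is central and consists of elements of order dividing $p$.

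For (ii), I would use this conceptual form. Take $g=(\alpha,\beta,\tau)$ and let $f$ be the isomorphism $X(F,p^d,\theta)\to X(F,p^d,\theta^g)$ of Lemma~\ref{lemma:1}. It satisfies $f(z,a)=(z^\alpha,a^\beta+\tau(z))$, so it maps the coset over $z$ onto the coset over $z^\alpha$. Isomorphisms preserve $Z(\cdot)$ and element orders, and $\alpha$ maps $Z_r(F)$ to itself. Hence $\theta\in C(F,p^d,z)$ iff $\theta^g\in C(F,p^d,z^\alpha)$, which gives $C(F,p^d,z)^g=C(F,p^d,z^\alpha)$.

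Items (iii) and (iv) follow formally from (ii). Since $\alpha$ permutes $Z_p(F)$, the union in (iii) is $\mathrm{Aut}^{\!*}(F,p^d)$-invariant. The set $\mathrm{Aut}^{\!*}(F,p^d,z)$ is the stabilizer of the subgroup $\langle z\rangle$ under an action, hence a subgroup. By the remark that $C(F,p^d,z)$ depends only on $\langle z\rangle$, each of its elements maps $C(F,p^d,z)$ to $C(F,p^d,z^\alpha)=C(F,p^d,z)$. The restricted action is affine by Lemma~\ref{lem:why-affine}.

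The main obstacle is keeping conventions straight in (ii). The action \eqref{eq:Astar-action} involves $\alpha^{-1}$, while Lemma~\ref{lemma:1} is phrased with $f$. I need to confirm that the cocycle $\theta^g$ really corresponds to the target loop in which $z^\alpha$, not $z^{\alpha^{-1}}$, plays the role of $z$. If the direct verification shows the opposite, the statement still holds with the subgroup condition unaffected, since only $\langle z\rangle=\langle z^\alpha\rangle$ is used in (iv).
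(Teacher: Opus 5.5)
Your proposal is correct and follows essentially the same route as the paper. For (i) you use linearity of the defining equations plus the isomorphism $X(F,p^d,\partial\tau)\cong F\times A$; for (ii) you transport the central fibre over $z$ by the isomorphism of Lemma~\ref{lemma:1}; and (iii)--(iv) follow formally. Your telescoping check for coboundaries and your observation that $\mathrm{Aut}^{\!*}(F,p^d,z)$ is a stabilizer subgroup are harmless extras. The convention worry resolves in your favour: the explicit formula gives $\theta^{(\alpha,\beta,\tau)}(x^{\alpha},y^{\alpha})=\theta(x,y)^{\beta}+\partial\tau(x,y)$, so $(x,a)\mapsto(x^{\alpha},a^{\beta}+\tau(x))$ sends the fibre over $z$ onto the fibre over $z^{\alpha}$, and (ii) holds exactly as stated.
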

\begin{proof}
Fix $z \in Z_p{(F)}\setminus \{1\}$.

\noindent\textit{(i)}
The conditions \eqref{eq:theta-central} and \eqref{eq:theta-order-p} are homogenous linear equations in values of $\theta$, hence, $C(F,p^d,z)$ is an $\mathbb{F}_p$-linear subspace of $C(F,p^d)$. If $\theta=\partial\tau$ is a coboundary, then $X(F,p^d,\theta)$ is isomorphic to the extension $F\times C_p^d$, in which $(z,0)$ is the central element of order $p$. By previous lemma $\partial\tau$ satisfies \eqref{eq:theta-central} and \eqref{eq:theta-order-p} so $\partial\tau \in C(F,p^d,z)$.

\noindent\textit{(ii)}
Let $(\alpha,\beta,\tau) \in \mathrm{Aut}^{\!*}(F,p^d)$ and $\theta \in C(F,p^d,z)$, then by Lemma~\ref{lemma:1} $X(F,p^d,\theta) \cong X(F,p^d,\theta^{(\alpha,\beta,\tau)})$. Since the element $(z,0)$ is central element of order $p$ in $X(F,p^d,\theta)$ so by isomorphism $(z^{\alpha},\tau(z))$ is central element of order $p$ in $X(F,p^d,\theta^{(\alpha,\beta,\tau)})$. By \eqref{eq:theta-central} and \eqref{eq:theta-order-p} this is equivalent to $\theta^{(\alpha,\beta,\tau)} \in C(F,p^d,z^\alpha)$. Therefore, $C(F,p^d,z)^{(\alpha,\beta,\tau)}=C(F,p^d,z^\alpha)$.

\noindent\textit{(iii)}
Since $\alpha \in \mathrm{Aut}(F)$ implies $\alpha(Z_p(F))=Z_p(F)$. Therefore from $(ii)$, $\mathrm{Aut}^{\!*}(F,p^d)$  acts on $\bigcup\limits_{z \in Z_{p}(F)} C(F, p^{d}, z)$.

\noindent\textit{(iv)}
By definition
\[\mathrm{Aut}^{\!*}(F,p^d,z)=\{(\alpha,\beta,\tau) \in \mathrm{Aut}^{\!*}(F,p^d) \mid \langle z \rangle=\langle z^\alpha \rangle\}\]
if $(\alpha,\beta,\tau) \in \mathrm{Aut}^{\!*}(F,p^d,z)$, then $z^\alpha \in \langle z \rangle$ and $C(F,p^d,z^\alpha)=C(F,p^d,z)$. 
Now by $(ii)$, $C(F,p^d,z)$ is invariant under $\mathrm{Aut}^{\!*}(F,p^d,z)$. Therefore, the affine action on $C(F,p^d)$ restricts to an affine action on $C(F,p^d,z)$.
\end{proof}

Theorem~\ref{thm:Q-orbits} leads us to introduce the following notation. We write $\boldO(F,p^d,z)$ for the set of $\mathrm{Aut}^{\!*}(F,p^d,z)$-orbits in $C(F,p^d,z)$. Thus, determining $|\boldO(F,p^{d},z)|$ amounts to counting orbits in the smaller space $C(F,p^{d},z)$. One must be careful, though, because the notion of isomorphism in $\boldQ(F,p^d)$ and equivalence in $\boldO(F,p^d,z)$ is not identical.

\begin{definition}\label{def:ptame}
Let $F$ be a finite nilpotent loop and let $p,r$ be distinct primes.
We say that $F$ is \textit{$p$-tame} if for every $d\ge 1$ and for every $z\in Z_{p}(F)\setminus\{1\}$, one has
\begin{align} \label{eq:p-tame}
|\boldQnonexact(F,p^{d})|=|\boldO(F,p^{d},z)|,
\end{align}
and 
\begin{align} \label{eq:p-r-tame}
|\boldQexact(F,r^{d}; \text{$p$-stretched})|=|\boldO(F,r^{d},z)|.
\end{align}
\end{definition}

\begin{theorem} \label{thm:p-tame-conditions}
\begin{enumerate}[(i)]
    \item If $|Z_{p}(F)|=p$, then $F$ is $p$-tame.
    \item If $F$ is an elementary abelian $p$-group, then $F$ is $p$-tame.
    \item If $|F|=p^2$, then $F$ is $p$-tame.
\end{enumerate}
\end{theorem}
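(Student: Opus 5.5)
The plan is to reduce $p$-tameness to an orbit-transfer statement, and then check that statement in the three cases.

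\textbf{General reduction.} Write $U=\bigcup_{z\in Z_p(F)\setminus\{1\}}C(F,p^d,z)$. By Lemma~\ref{lem:nonexact} and the centrality/order criterion \eqref{eq:theta-central}, \eqref{eq:theta-order-p}, the cocycle $\theta$ gives a nonexact extension exactly when $\theta\in U$. Since $U$ is $\mathrm{Aut}^{\!*}(F,p^d)$-invariant (Lemma~\ref{lem:restricted-ext}(iii)), Theorem~\ref{thm:Q-orbits} turns $|\boldQnonexact(F,p^d)|$ into the number of $\mathrm{Aut}^{\!*}(F,p^d)$-orbits on $U$. By Lemma~\ref{lem:stretched}, the $p$-stretched $r^d$-extensions are likewise the orbits on $\bigcup_z C(F,r^d,z)$. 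Since $F$ is nilpotent, every $r^d$-extension of $F$ with $Z_r(F)=1$ is automatically $r^d$-exact, and in case (i) we use the same description.

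It then suffices to prove two things for a fixed $z$. First, every orbit on $U$ meets $C(F,p^d,z)$. Second, if $\theta$ and $\theta^g$ both lie in $C(F,p^d,z)$, then $\theta^g=\theta^h$ for some $h\in\mathrm{Aut}^{\!*}(F,p^d,z)$.

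For the second point, put $g=(\alpha,\beta,\tau)$ and $L'=X(F,p^d,\theta^g)$. Then $\theta^g\in C(z)\cap C(z^{\alpha})$. So the subgroup $S=Z_p(L')/A\le Z_p(F)$ (for the stretched case, the image $T$ of the $p$-part of $Z(L')$) contains both $z$ and $z^\alpha$. It suffices to find an automorphism $k$ of $L'$ with $k(A)=A$ whose induced map on $F$ sends $\langle z^\alpha\rangle$ to $\langle z\rangle$. By Lemma~\ref{lemma:2}, $k$ is a stabilizer element of $\theta^g$ in $\mathrm{Aut}^{\!*}$, and then $h=gk$ works.

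\textbf{Case (i).} If $|Z_p(F)|=p$, there is a unique subgroup $\langle z\rangle$ of order $p$. Hence $U=C(F,p^d,z)$ and $\mathrm{Aut}^{\!*}(F,p^d,z)=\mathrm{Aut}^{\!*}(F,p^d)$, so both equalities \eqref{eq:p-tame} and \eqref{eq:p-r-tame} are immediate.

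\textbf{Case (ii).} Let $F$ be elementary abelian. Then $\mathrm{Aut}(F)=GL$ is transitive on subgroups of order $p$, which gives the first point. For the second point I would use a central automorphism $k(u)=u\cdot \eta(uA)$, where $\eta\colon F\to Z_p(L')$ is an $\mathbb{F}_p$-linear map. Such a map exists because $F$ and the elementary abelian group $B=Z_p(L')$ are vector spaces, and a linear section $S\to B$ of $B\to B/A$ exists.

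Choose $\eta$ so that the induced map on $F$, namely $x\mapsto x+\bar\eta(x)$ with $\bar\eta(x)=\eta(x)A\in S$, is a transvection (or the identity) sending $z^\alpha$ to $z$. Concretely, set $\bar\eta(z^\alpha)=z-z^\alpha$ and let $\bar\eta$ vanish on a complement of $\langle z^\alpha\rangle$. Centrality of $B$ makes $k$ a homomorphism, and invertibility on $F$ together with $k|_A=\mathrm{id}$ makes it bijective.

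In the $r$-stretched variant the same trick applies with $T$ in place of $S$. Here the $p$-part of $Z(L')$ maps isomorphically onto $T$, because $A$ is an $r$-group, so no section is even needed.

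\textbf{Case (iii).} If $|F|=p^2$, then $F$ is centrally nilpotent, so $Z_p(F)\ne 1$. Either $|Z_p(F)|=p$, and case (i) applies, or $Z_p(F)=F$. In the latter case $F=Z(F)$ is an abelian group of exponent $p$, hence elementary abelian, and case (ii) applies.

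\textbf{Main obstacle.} The delicate step is the second point of the reduction. The subspaces $C(F,p^d,z)$ for different $\langle z\rangle$ are not disjoint (they all contain $B(F,p^d)$), so the usual "stabilizer of one block" argument cannot be used naively. It must be replaced by the construction of the central automorphism $k$ of the extension loop, and the care lies in checking that $k$ really is a bijective loop homomorphism preserving $A$.
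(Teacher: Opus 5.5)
Your proof is correct, modulo the small repairs below, and it shares the paper's skeleton. The nonexact (resp.\ $p$-stretched) cocycles are exactly $\bigcup_z C(F,p^d,z)$, and every $\mathrm{Aut}^{\!*}(F,p^d)$-orbit on this union meets $C(F,p^d,z)$. An element $g$ linking two cocycles of $C(F,p^d,z)$ is then corrected by a stabilizer element supplied by Lemma~\ref{lemma:2}. Cases (i) and (iii) go as in the paper.

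The real difference is how you build that stabilizer in case (ii). The paper sets $B=\langle (z,0),(z',0)\rangle$ and picks a complement $\bar D$ of $\langle z,z'\rangle$ in $F$. It then splits $L_2=D\times B$, with $D$ the preimage of $\bar D$, and extends an element of $\mathrm{GL}(B)$ by the identity on $D$. You instead use the central automorphism $u\mapsto u\,\eta(uA)$, with $\eta\colon F\to Z_p(L')$ a homomorphism. The paper's $\varphi$ is in fact of the same type: its $\eta$ factors through the projection $F\to\langle z,z'\rangle$.

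Your formulation has three advantages. It avoids verifying a direct-product decomposition, and the homomorphism and bijectivity checks become one-liners. It also lets you carry out the stretched identity explicitly, where the paper only says it is analogous. Finally, it isolates what is really used: $z,z^\alpha\in Z_p(L')/A$ and a homomorphism $F\to C_p$ taking the same nonzero value on $z$ and $z^\alpha$. The paper's argument instead needs $\langle z,z'\rangle$ to be a direct factor of $F$.

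Three small repairs are needed.

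First, your concrete recipe fails for an arbitrary complement $W$ of $\langle z^\alpha\rangle$. If $z\in W$, the map $x\mapsto x+\bar\eta(x)$ sends both $z^\alpha$ and $z$ to $z$, so it is singular. The only case needing work is $\langle z\rangle\neq\langle z^\alpha\rangle$; there, choose $W$ containing $z-z^\alpha$. Then $\bar\eta(x)=f(x)(z-z^\alpha)$ with $f(z^\alpha)=1$ and $f(z-z^\alpha)=0$, which is a genuine transvection.

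Second, in (iii) nilpotence of $F$ is the standing hypothesis of Definition~\ref{def:ptame}. It is not a consequence of $|F|=p^2$: already for $p=3$ there are non-nilpotent loops of order $9$.

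Third, your exactness remark justifies \eqref{eq:p-r-tame} only when $Z_r(F)=1$. This is automatic in (ii) and (iii), but not in (i). Take $F=C_6$, $p=2$, $r=3$, with $z$ the involution. The zero cocycle lies in $C(F,3,z)$ and gives the $3$-nonexact loop $C_6\times C_3$, so $|\boldO(F,3,z)|>|\boldQexact(F,3;\text{$2$-stretched})|$. The paper's proof passes over the same point. This is therefore a caveat on the statement, which should be read with $Z_r(F)=1$ for the stretched identity, rather than on your route.
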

\begin{proof}
We establish that the given assumptions entail \eqref{eq:p-tame} for every $z \in Z_p(F)\setminus \{1\}$. An analogous argument yields the corresponding claim for \eqref{eq:p-r-tame}.

\noindent\textit{(i)}
Let $Z_{p}(F)=\langle z\rangle$. Then $\langle z^{\alpha} \rangle = \langle z\rangle$ for every $\alpha\in\mathrm{Aut}(F)$. Consequently,
\[
\mathrm{Aut}^{\!*}(F,p^{d},z)=\mathrm{Aut}^{\!*}(F,p^{d}).
\]
By Lemma~\ref{lem:nonexact}, the nonexact extensions correspond exactly to the $\mathrm{Aut}^{\!*}(F,p^{d})$ orbits on $C(F,p^{d},z)$, that is, $\boldQnonexact(F,p^d) = \boldO(F,p^{d},z)$. 

\noindent\textit{(ii)} Let $F\cong C_{p}^{n}$ with $n\ge 1$, so $Z_{p}(F)=F$.
The group $\mathrm{Aut}(F)\cong \mathrm{GL}(n,p)$ acts transitively on
\[
\bigl\{\langle z\rangle \mid z\in Z_{p}(F),\ z\neq 1\bigr\},
\]
Fix $z \in Z_p(F) \setminus \{1\}$. Let $\theta\in C(F,p^{d})$, and assume that the corresponding extension $L=X(F,p^{d},\theta)$ is nonexact.
By Lemma~\ref{lem:nonexact}, there exists $z'\in Z_{p}(F)$, $z'\neq 1$, such that
\[
(z',0)\in Z_{p}(L).
\]
Choose $\alpha\in\mathrm{Aut}(F)$ such that $z'^{\alpha}=z$.
By Lemma~\ref{lemma:1}, the cocycle $\theta^{(\alpha,\mathrm{id},0)}$
defines an isomorphic extension and satisfies
\[
(z,0)\in Z_{p}\bigl(X(F,p^{d},\theta^{(\alpha,\mathrm{id},0)})\bigr),
\]
that is,
\[
\theta^{(\alpha,\mathrm{id},0)}\in C(F,p^{d},z).
\]
Hence every nonexact $\mathrm{Aut}^{\!*}(F,p^{d})$ orbit on $C(F,p^{d})$
contains an element of $C(F,p^{d},z)$.

Now let $\theta_{1},\theta_{2}\in C(F,p^{d},z)$ and assume that
\[
\theta_{2}=\theta_{1}^{(\alpha,\beta,\tau)}
\]
for some $(\alpha,\beta,\tau)\in\mathrm{Aut}^{\!*}(F,p^{d})$.
If $\langle z\rangle^{\alpha}=\langle z \rangle$, then $(\alpha,\beta,\tau)\in\mathrm{Aut}^{\!*}(F,p^{d},z)$, 
and the claim follows.
Assume therefore that $\langle z\rangle^{\alpha}=\langle z' \rangle \neq \langle z \rangle$.

Set $L_{2}=X(F,p^{d},\theta_{2})$ and write $\bar z=(z,0)$, $\bar{z}'=(z',0)$.
Since $\theta_{2}\in C(F,p^{d},z)$, we have $\bar z\in Z_{p}(L_{2})$.
Moreover, since $\theta_{2}\in C(F,p^{d},z')$, we also have
$\bar z'\in Z_{p}(L_{2})$.
Therefore $Z_{p}(L_{2})/A$ contains the subgroup
$\langle z,z'\rangle\le F$ of order $p^2$. Since $F$ is elementary abelian, there exists a complement
$\bar D\le F$ such that
\[
F=\langle z,z'\rangle \oplus \bar D.
\]
Let $D=\bar D \times A \le L_{2}$ be the full preimage of $\bar D$ under the natural projection $L_{2}\to L_{2}/A\cong F$. Define the $p$-central subgroup $B=\langle (z,0), (z',0) \rangle$ of $L_2$. Then $D\cap B=0$ and $L_{2}=DB$, hence $L_2$ is the direct product of the loop $D$ with the elementary abelian group $B$. Therefore, the automorphism group of $L_{2}$ acts transitively on $B\setminus \{1\}$, that is, there exists $\varphi\in\mathrm{Aut}(L_{2})$ such that
$\bar z^{\varphi}=\bar z'$, and $\varphi|_D=\mathrm{id}_D$.
Since $A\leq D$, $\varphi(A)=A$. By Lemma~\ref{lemma:2}, the automorphism $\varphi$ is induced by a triple 
$(\alpha_{0},\beta_{0},\tau_{0})\in\mathrm{Aut}^{\!*}(F,p^{d})$ satisfying
\[
\theta_{2}^{(\alpha_{0},\beta_{0},\tau_{0})}=\theta_{2}
\quad\text{and}\quad
z^{\alpha_{0}}=z'.
\]
Consequently,
\[
\theta_{2}
=\theta_{1}^{(\alpha,\beta,\tau)}
=\theta_{1}^{(\alpha,\beta,\tau)(\alpha_{0},\beta_{0},\tau_{0})^{-1}},
\]
and the $\mathrm{Aut}(F)$ component of
$(\alpha,\beta,\tau)(\alpha_{0},\beta_{0},\tau_{0})^{-1}$
fixes $\langle z \rangle$.
Thus $\theta_{1}$ and $\theta_{2}$ lie in the same
$\mathrm{Aut}^{\!*}(F,p^{d},z)$ orbit.

\noindent\textit{(iii)} If $F$ is nilpotent of order $p^2$, then either $|Z_p(F)|=p$ or $F$ is elementary abelian. The result follows from $(i)$ and $(ii)$. 
\end{proof}

\begin{remark}
All nilpotent loops of order less than 12 are 2-tame, except for the abelian group $C_2\times C_4$. Indeed, if $F$ is a nilpotent loop of order less than 12, and $F$ is not 2-tame, then $|Z_2(F)|\geq 4$, and $F$ is an abelian group, which is not elementary abelian. 
\end{remark}

\begin{lemma} \label{lem:C4C2}
Let $F=C_4\times C_2$, $z_1,z_2\in F$ such that $F/\langle z_1 \rangle \cong C_2\times C_2$ and $F/\langle z_2 \rangle \cong C_4$. Then for all prime $p$ and positive integer $d$,
\begin{align} \label{eq:C4C2-ext}
|\boldQnonexact(F,2^d)|=|\boldO(F,2^d,z_1)|+|\boldO(F,2^d,z_2)|-1.
\end{align}
and
\begin{align} \label{eq:C4C2-stretched}
|\boldQexact(F,p; \text{$2$-stretched})|=|\boldO(F,p,z_1)|+|\boldO(F,p,z_2)|-1.
\end{align}
\end{lemma}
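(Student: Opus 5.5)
The plan is to argue by a case distinction on which cyclic subgroups of order $2$ in $F=C_4\times C_2$ lift to central elements of order $2$. Write $F=\langle a\rangle\times\langle b\rangle$ with $a$ of order $4$ and $b$ of order $2$. Then $Z_2(F)=\{1,a^2,b,a^2b\}$. Since $F/\langle a^2\rangle\cong C_2\times C_2$ and $F/\langle b\rangle\cong F/\langle a^2b\rangle\cong C_4$, we may take $z_1=a^2$ and $z_2=b$. The automorphism $a\mapsto ab$, $b\mapsto a^2b$ swaps $\langle b\rangle$ and $\langle a^2b\rangle$ and fixes $\langle a^2\rangle$. So $\mathrm{Aut}(F)$ has exactly two orbits on the three subgroups of order $2$: $\{\langle z_1\rangle\}$ and $\{\langle b\rangle,\langle a^2b\rangle\}$.

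By Lemma~\ref{lem:nonexact}, an extension is nonexact exactly when some nontrivial $z\in Z_2(F)$ lifts into $Z_2(L)$, i.e.\ $\theta\in\bigcup_z C(F,2^d,z)$. Define $\mathcal O_1$ as the set of nonexact $\mathrm{Aut}^{\!*}(F,2^d)$-orbits meeting $C(F,2^d,z_1)$, and $\mathcal O_2$ as those meeting $C(F,2^d,z_2)$. By Lemma~\ref{lem:restricted-ext}(ii) and the transitivity above, every nonexact orbit lies in $\mathcal O_1\cup\mathcal O_2$. Inclusion--exclusion will then follow from three claims:
\begin{enumerate}[(a)]
\item $|\mathcal O_1|=|\boldO(F,2^d,z_1)|$;
\item $|\mathcal O_2|=|\boldO(F,2^d,z_2)|$;
\item $|\mathcal O_1\cap\mathcal O_2|=1$.
\end{enumerate}

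For (a), the group $\mathrm{Aut}^{\!*}(F,2^d,z_1)$ is all of $\mathrm{Aut}^{\!*}(F,2^d)$, because $\langle z_1\rangle$ is characteristic. The argument of Theorem~\ref{thm:p-tame-conditions}(i) then applies verbatim. For (b), suppose $\theta_1,\theta_2\in C(F,2^d,z_2)$ and $\theta_2=\theta_1^{g}$ with $g$ moving $\langle b\rangle$ to $\langle a^2b\rangle$. Then $\theta_2\in C(F,2^d,b)\cap C(F,2^d,a^2b)$, so $\bar b$ and $\overline{a^2b}$ both lie in $Z_2(L_2)$. Hence $Z_2(L_2)/A\supseteq Z_2(F)$, and in particular $\theta_2$ lies in all three spaces.

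This reduces (b) and (c) to analysing the cocycles $\theta$ for which all of $Z_2(F)$ lifts. For such $\theta$, the full preimage $B=\{(z,0)\}A$ of $Z_2(F)$ is central in $L$, and $L$ is a central extension of $F$ by it. The plan is to show that this forces $L$ to be an abelian group. The subgroup $\langle \bar a\rangle$ together with the central subgroup $\langle\bar b\rangle A$ generates $L$, and a loop generated by a central subloop $Z$ and a single element $u$ is a group. Indeed, its elements are $u^iz$, since powers of one element in a loop whose quotient by a central subloop is cyclic associate via the center. The loop is therefore an abelian group.

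Next I will pin $L$ down up to isomorphism of pairs. $L$ is abelian of order $8\cdot 2^d$ with $L/A\cong C_4\times C_2$, and $Z_2(L)\supseteq \langle \bar b\rangle A$ has index at most $4$. In fact $Z_2(L)=\Omega_1(L)$, and $L/\Omega_1(L)$ must be the image of $\langle \bar a\rangle$. I expect to conclude that $L\cong C_4\times C_2^{d+1}$ with $A$ a subgroup such that $L/A\cong C_4\times C_2$, and that exactly one such pair exists up to automorphism. This uniqueness is the \emph{main obstacle}. It requires showing that $\mathrm{Aut}(C_4\times C_2^{d+1})$ is transitive on those $A\cong C_2^d$ with $A\le\Omega_1$, $A\not\ni$ the square $\bar a^2$, and quotient $C_4\times C_2$. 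I would prove this with an explicit basis adapted to $A$, namely a generator $u$ of order $4$ with $u^2\notin A$.

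Given this, (c) holds because the unique orbit is the one of $C_4\times C_2^{d+1}$. It lies in both $\mathcal O_1$ and $\mathcal O_2$, and any orbit in both must have all of $Z_2(F)$ lifting, by the argument in (b). For (b), since $L_2$ is that unique abelian group, an automorphism of $L_2$ preserving $A$ and swapping $\bar b$ with $\overline{a^2b}$ exists. Lemma~\ref{lemma:2} then lets us correct $g$ exactly as in Theorem~\ref{thm:p-tame-conditions}(ii).

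Equation \eqref{eq:C4C2-stretched} will follow by the same scheme with $A=C_p$ for odd $p$, using Lemma~\ref{lem:stretched}. Here the case where all of $Z_2(F)$ lifts gives a loop $L$ containing the full preimage of $Z_2(F)$ in its center. By the cyclic-quotient argument $L$ is again abelian, namely $C_4\times C_2\times C_p$, which yields the single overlap orbit.
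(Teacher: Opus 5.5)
Your route is the paper's. You split the involutions of $F$ into the $\mathrm{Aut}(F)$-classes $\{a^2\}$ and $\{b,a^2b\}$, count the orbits meeting $C(F,2^d,z_1)$ and $C(F,2^d,z_2)$, and subtract the single orbit lying in both. The paper's proof only sketches this. You also prove claim (b), which the paper takes for granted: distinct $\mathrm{Aut}^{\!*}(F,2^d,z_2)$-orbits on $C(F,2^d,z_2)$ are not fused by $\mathrm{Aut}^{\!*}(F,2^d)$, via the correction trick of Theorem~\ref{thm:p-tame-conditions}(ii). You correctly identify the common member for general $d$ as $C_4\times C_2^{d+1}$; the group $C_4\times C_2\times C_p$ named in the paper is the overlap for \eqref{eq:C4C2-stretched}.

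One step rests on a false general claim. A loop generated by a central subloop and a single element need not be a group. Nor do powers of an element associate merely because the quotient by a central subloop is cyclic. For a counterexample, take $F=C_3=\langle x\rangle$, $A=C_2$, $\theta(x,x)=1$ and all other values $0$. Then $u=(x,0)$ satisfies $u^2=(x^2,1)$ and $u^2u^2=(x,0)\neq(x,1)=(u^2u)u$.

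Your conclusion is still true, for a reason you already have. When all of $Z_2(F)$ lifts, the full preimage $B$ of $Z_2(F)$ is central. Moreover $L/B\cong F/Z_2(F)\cong C_2$, not $F$ as you wrote. A central extension of $C_2$ is always an abelian group: a normalized cocycle on $C_2$ has only the value $\theta(u,u)$, so it satisfies the associativity and commutativity identities trivially. So argue with $B$, not with $\langle\bar b\rangle A$, whose quotient is $C_4$; there cyclicity alone does not suffice. The same correction applies in the stretched case.

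With this fix, your ``main obstacle'' in the $2^d$ case also becomes easy. The element $\bar a$ has order $4$ with $\bar a^2\notin A$, and $\bar b$ has order $2$. Hence $L=\langle\bar a\rangle\times\langle\bar b\rangle\times A$ directly, which gives uniqueness of $[L,A]$ at once. The map $\bar a\mapsto\bar a\bar b$, $\bar b\mapsto\bar a^2\bar b$, identity on $A$, is then exactly the automorphism you need in (b). Your transitivity argument via the stabilizer of $u^2$ in $\mathrm{GL}(\Omega_1(L))$ is also correct, just longer.
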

\begin{proof}
We first notice that $F$ has three involutions $z_1,z_2,z_1z_2$, and $F/\langle z_1z_2 \rangle \cong C_4$. Moreover, there is an $\alpha\in\mathrm{Aut}(F)$ with $z_2^\alpha=z_1z_2$. By Lemma \ref{lem:restricted-ext}, cocycles in $C(F,2,z_2)$ and $C(F,2,z_1z_2)$ define isomorphic extensions. All nonexact extensions are in $\boldO(F,p,z_1)$ or $\boldO(F,p,z_2)$. The only loop contained in both is the abelian group $C_4\times C_2\times C_p$. 
\end{proof}

\section{Experimental results on nilpotent loops of order less than 24}

Using our orbit-counting techniques, we sought to reproduce the computational findings of \cite{DalyVojtechovsky2009}. More specifically, we not only verified the figures reported by Daly and Vojt\v{e}chovsk\'y, but also used them as a benchmark to validate our methodology. In this section, we describe our computational procedure in greater detail and present the resulting data. Our starting point is that for a $p$-tame nilpotent loop $F$, formulas \eqref{eq:p-tame} and \eqref{eq:p-r-tame} enable us to efficiently compute $|\boldQexact(F,p^{d})|$ and $|\boldQexact(F,p^{d};\text{$r$-stretched})|$ by orbit-counting. Hence, we are able to compute the values
\begin{align} \label{eq:NL-m-pd}
TS(m,p^d) = \sum_{\substack{F\in\NPL(m) \\ \text{$F$ is $p$-tame}}} |\boldQexact(F,p^{d})|
\end{align}
and
\begin{align} \label{eq:NL-m-pd-r}
TS(m,p^d;r) = \sum_{\substack{F\in\NPL(m) \\ \text{$F$ is $p$-tame}}} |\boldQexact(F,p^{d})|-|\boldQexact(F,p^d;\text{$r$-stretched})|
\end{align}
for $m<12$. We use Lemma \ref{lem:C4C2} to deal with the exceptional case $F=C_4\times C_2$. 

Let us emphasize that a loop in $\boldQexact(F,p^{d})$ may have a center which is larger than $p^d$. Also, for $p_1\neq p_2$, $\boldQexact(F_1,p_1^{d_1})$ and $\boldQexact(F_2,p_2^{d_2})$ may have nonempty intersection. For example, $C_3\times C_2$ is contained in both $\boldQexact(C_3,2)$ and $\boldQexact(C_2,3)$. Also, $TS(6,2)$ counts the abelian group $C_3\times C_4$, $TS(3,4)=|\boldQ(C_3,4)|$ counts $C_3\times C_2^2$, and $TS(4,3)$ counts both of these groups. 

% The following proposition shows that if $F$ is small, then we can compute the number of stretched extensions by orbit-counting.

% \begin{proposition}
% Let $p$ and $r$ be distinct primes, and $F$ a nilpotent loop such that $p\nmid |F|$ and $|Z_r(F)|=r$. Let $w\in Z_r(F)\setminus \{1\}$, and define $C'(F,p^d,w)\leq C(F,p^d)$ to be the space of cocycles satisfying equation \eqref{eq:theta-central} for $w$. Then $\mathrm{Aut}^{\!*}(F,p^d)$ acts on $C'(F,p^d,w)$. Moreover, 
% \begin{align*} %\label{eq}
% \theta \mapsto [X(F,p^d,\theta)]
% \end{align*}
% induces a bijection between the orbits and the set $\boldQexact(F,p^d;\text{$r$-stretched})$ of stretched extensions. 
% \end{proposition}
% \begin{proof}
% Since $|Z_r(F)|=r$ and $\langle w \rangle = Z_r(F)$, one has $\langle z \rangle^\alpha =\langle z \rangle$ for all $\alpha\in \mathrm{Aut}(F)$. This implies that $\mathrm{Aut}^{\!*}(F,p^d)$ acts on $C'(F,p^d,w)$. The second claim is immediate. 
% \end{proof}

% We can compute the number of $\mathrm{Aut}^{\!*}(F,p^d)$-orbits on $C'(F,p^d,w)$ by orbit-counting. Our notation will be

\begin{table}
\caption{The number of nilpotent loops of order at most 22 \label{tab:less-than-24}}
\scriptsize
\begin{tabular}{lrrp{55mm}}
$\mathbf{n}$ & \textbf{Number of nilpotent loops}          & \textbf{time (ms)} & \textbf{Strategy}               \\
8          & 139                                         & 262                     & $|\boldQ(C_4,2)| + |\boldQ(C_2\times C_2,2)| -1$     \\
9          & 10                                          & 29                     & $|\boldQ(C_3,3)|$                       \\
10         & 1'044                                      & 26'045                 & $|\boldQ(C_5,2)|$                      \\
12         & 2'623'755                                 & 1'315                  & $TS(4,3;2)+TS(6,2)+|\boldQ(C_3,4)|-2$ \\
14         & 178'962'784                               & 186                    & $|\boldQ(C_7,2)|$                      \\
15         & 66'630                                     & 152                    & $|\boldQ(C_5,3)|+|\boldQ(C_3,5)|-1$           \\
16         & 466'409'543'467'341                     & 166'141                & $TS(2,8)+TS(4,4)+TS(8,2)+$        \\
&&                                                                            & $|\boldQexact(C_4\times C_2,2)|$ \\
18         & 157'625'998'010'363'396                & 8'792                  & $TS(6,3;2)+TS(9,2)$           \\
20         & 4'836'883'870'081'433'134'085'047    &193'840                & $TS(10,2)+TS(4,5)+|\boldQ(C_5,4)|$  \\
21         & 17'157'596'742'633                      & 546                    & $|\boldQ(C_7,3)|+|\boldQ(C_3,7)|-1$           \\
22         & 123'794'003'928'541'545'927'226'368 & 993                   & $|\boldQ(C_{11},2)|$                     
\end{tabular}
\end{table}

Our findings are summarized in Table~\ref{tab:less-than-24}. In all cases, we obtained the same values as Daly and Vojt\v{e}chovsk\'y~\cite{DalyVojtechovsky2009}. Those authors reported that enumerating nilpotent loops of order $20$ required approximately 2 days of computation on a single-processor Unix machine using the GAP \cite{GAP4} computer algebra system. By contrast, our computations \cite{nagygp2025nilpotentloops} required only seconds to minutes on a standard personal computer. 

This improvement in performance is principally due to recasting the original classification task as an orbit-counting problem arising from an affine group action. This reformulation reduces the isomorphism testing procedure to computations in linear algebra over finite fields and, in turn, facilitates the systematic application of Burnside’s lemma. In the enumeration of nilpotent loops, our focus is on the $p$-center $Z_p(L)$. The notions of exact and stretched extensions, combined with the $p$-tame property of small nilpotent loops, further reduce the problem to counting orbits. Consequently, rather than repeatedly performing isomorphism reduction over a large cohomological search space, restricting to $p$-centers via $p$-tameness confines the orbit-counting problem to significantly smaller linear subspaces.

\section{Enumeration of nilpotent loops of order 24 with large center}

\begin{table}
\caption{The number of nilpotent loops of order 24 with a center of size at least 3 \label{tab:order-24}}
\scriptsize
\begin{tabular}{p{8cm}r}
$2$-center of size $8$ & 4 \\ 
$2$-center of size $4$ & 641'384'579'065 \\ 
$3$-center of size $3$ & 2'318'640'494'101'035'419'767 \\ 
\hline
$2$-center of size $8$ and $3$-center of size 3 & 1 \\ 
$2$-center of size $4$ and $3$-center of size 3 & 1 \\ 
Center $C_2\times C_3$ & 39'800 \\ 
\hline\hline
\textbf{Total number of nilpotent loops with} & 2'318'640'494'742'419'998'834 \\ 
\textbf{a center of size at least 3} & 
\end{tabular}
\end{table}

\begin{theorem} \label{thm:large-center-24}
Table \ref{tab:order-24} lists the numbers of nilpotent loops of order 24 whose center has size at least 3, together with the counts corresponding to specified sizes of their 2- and 3-centers.
\end{theorem}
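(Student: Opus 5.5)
This is a computational theorem, so the plan is to derive each row of Table~\ref{tab:order-24} by orbit counting and then combine the rows by inclusion--exclusion. A nilpotent loop $L$ of order $24$ has $|Z(L)|\ge 3$ exactly when $|Z_2(L)|\in\{4,8\}$ or $|Z_3(L)|=3$. Here $|Z_2(L)|=8$ forces $L$ to be a group, since then $|L/Z(L)|\le 3$ and $L/Z(L)$ is cyclic. The remaining possibility, center $C_2\times C_3$, is caught by the $3$-center count. By Lemma~\ref{lem:nilpot-decomposition},
\[
|\NPL(24,2^2)|=\sum_{F\in\NPL(6)}|\boldQexact(F,4)|,\qquad |\NPL(24,3)|=\sum_{F\in\NPL(8)}|\boldQexact(F,3)|,
\]
and $|\NPL(24,2^3)|$ comes from $F=C_3$. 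The total is then
\[
|\NPL(24,8)|+|\NPL(24,4)|+|\NPL(24,3)|-|\NPL(24,8)\cap\NPL(24,3)|-|\NPL(24,4)\cap\NPL(24,3)|,
\]
and I would check that the printed figures satisfy this. The row ``Center $C_2\times C_3$'' is reported separately; it lies inside the $3$-center count and is not subtracted.

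\textbf{The $2$-center rows.} Since $\NPL(6)=\{C_6\}$, I would compute $|\boldQexact(C_6,4)|$. By Theorem~\ref{thm:p-tame-conditions}(i), $C_6$ is $2$-tame because $|Z_2(C_6)|=2$. With $z$ the involution of $C_6$,
\[
|\boldQexact(C_6,4)|=|\boldQ(C_6,4)|-|\boldO(C_6,4,z)|,
\]
where $|\boldQ(C_6,4)|$ is obtained by Burnside's lemma for $\mathrm{Aut}^{\!*}(C_6,4)$ acting on $C(C_6,4)$, a space of dimension $2\cdot 25$ over $\mathbb{F}_2$. The fixed-point spaces are affine subspaces whose ranks come from linear algebra. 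Similarly $|\NPL(24,8)|=|\boldQexact(C_3,8)|$, which can also be confirmed directly from the list of groups of order $24$.

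\textbf{The $3$-center row and the main obstacle.} The dominant term is $\sum_{F\in\NPL(8)}|\boldQexact(F,3)|$, summed over all $139$ nilpotent loops of order $8$. Here $3\nmid |F|$, so $Z_3(F)=1$; hence every extension by $C_3$ is exact and $|\boldQexact(F,3)|=|\boldQ(F,3)|$. Each term is a Burnside count on $C(F,3)$, of dimension $49$ over $\mathbb{F}_3$. The obstacle is computational: for groups $F$ such as $C_2^3$ the group $\mathrm{Aut}^{\!*}(F,3)$ is large, and one must enumerate its conjugacy classes. I would reduce this by writing the Burnside sum over classes of $\mathrm{Aut}(F)\times\mathrm{GL}(1,3)$ and integrating the translation part analytically. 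For fixed $(\alpha,\beta)$, the number of $\tau$ with $\theta^{(\alpha,\beta)}-\theta\in -(\partial\tau)^\alpha$ is controlled by the dimension of $B(F,3)$. Only the rank of $(\alpha,\beta)-1$ on $C/B$ and on $B$ should matter, which amounts to counting fixed points of the linear action on $H=C/B$ together with a correction term.

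\textbf{The intersection rows.} The $C_2\times C_3$ row counts loops in $\boldQexact(F,3)$ with $|Z_2(L)|=2$; by Lemma~\ref{lem:stretched} these are stretched towards $2$. The row $|\NPL(24,4)\cap\NPL(24,3)|$ counts loops with $|Z_2|=4$ and $|Z_3|=3$, and the row $|\NPL(24,8)\cap\NPL(24,3)|$ counts those with $|Z_2|=8$ and $|Z_3|=3$. I would compute these stretched numbers via formula~\eqref{eq:p-r-tame}, counting $\boldO(F,3,z)$ for $2$-tame $F$. For $F=C_4\times C_2$ I would use Lemma~\ref{lem:C4C2}. For the non-tame abelian groups of order $8$ I would instead identify the stretched extensions by hand; they are abelian-by-central and few in number. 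The two intersection rows of value $1$ are the abelian groups $C_2^3\times C_3$ and $C_4\times C_2\times C_3$, or similar, and I would verify them directly.
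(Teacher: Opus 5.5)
Your architecture matches the paper's: the three classes $\NPL(24,8)$, $\NPL(24,4)$, $\NPL(24,3)$ via Lemma~\ref{lem:nilpot-decomposition}, $Z_3(F)=1$ for $|F|=8$, subtraction of the two abelian intersections, and the $C_2\times C_3$ row from $3$-extensions stretched towards $2$. The gap is that two of your simplifications import the group fact ``$L/Z(L)$ cyclic implies $L$ abelian'', which fails for loops.

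Take $\theta\in C(C_3,C_2)$ with $\theta(x,x^2)=\theta(x^2,x)=1$ and all other values $0$. In $X(C_3,C_2,\theta)$ one has $((x,0)(x^2,0))(x^2,0)=(x^2,1)$ but $(x,0)((x^2,0)(x^2,0))=(x^2,0)$, so this loop is not associative.
\begin{itemize}
\item Hence $\NPL(6)\neq\{C_6\}$. Normalize $\theta(x,x)=\theta(x^2,x^2)=0$ by coboundaries. The classes are then the pairs $(\theta(x,x^2),\theta(x^2,x))\in\mathbb{F}_2^2$, and inversion swaps the coordinates, so $|\NPL(6)|=|\boldQ(C_3,2)|=3$. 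The two nonassociative loops have center of order $2$, so they are $2$-tame and contribute to $\NPL(24,4)$. Computing only $|\boldQexact(C_6,4)|$ therefore gives a wrong second row. This is why the paper writes $|\NPL(24,4)|=TS(6,4)$, summing over all three loops of order $6$.
\item Likewise, $|Z_2(L)|=8$ does not force $L$ to be a group. $|\boldQ(C_3,8)|$ counts orbits of $\mathrm{GL}(3,2)\times\mathrm{Aut}(C_3)$ on pairs $(u,v)\in(\mathbb{F}_2^3)^2$, with the automorphism swapping $u$ and $v$. This gives $4$ orbits: zero; $u=v\neq0$; exactly one of $u,v$ nonzero; $u,v$ independent. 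Only the zero class is the group $C_2^3\times C_3$. Your formula $|\boldQexact(C_3,8)|$ is right, but the check against the groups of order $24$ would return $1$, not $4$.
\end{itemize}

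For the last row, $\sum_{F\in\NPL(8)}|\boldQexact(F,3;\text{$2$-stretched})|$ counts every loop with $|Z_3(L)|=3$ and $Z_2(L)\neq1$. That includes the three abelian groups $C_8\times C_3$, $C_4\times C_2\times C_3$ and $C_2^3\times C_3$, whose centers are larger than $C_6$. The paper subtracts $3$; you never make this correction, and your description ``$|Z_2(L)|=2$'' would still contain $C_8\times C_3$. The only non-$2$-tame loop of order $8$ is $C_4\times C_2$, already handled by Lemma~\ref{lem:C4C2}, so no ``by hand'' step is needed.

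Your idea of integrating out $\tau$ is sound and needs no correction term. The translations form a normal subgroup whose orbits are the cosets of $B(F,A)$, so you are simply counting $\mathrm{Aut}(F)\times\mathrm{Aut}(A)$-orbits on $C/B$.

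Finally, your opening ``exactly when'' is false. Replacing $C_2$ by $C_8$ and $1$ by $4$ in the example above gives a nonassociative loop with center exactly $C_8$, $|Z_2|=2$ and $|Z_3|=1$. The paper's union of the three classes has the same blind spot, as its closing remark (3) concedes. So the table's total really counts loops with $|Z_2(L)|\ge4$ or $|Z_3(L)|=3$.
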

\begin{proof}
In lines 1--3 of Table \ref{tab:order-24} we have the classes $\NPL(24,8)$, $\NPL(24,4)$ and $\NPL(24,3)$. We compute their cardinalities by
\begin{align*}
|\NPL(24,8)| &= |\boldQ(C_3,8)|, \\
|\NPL(24,4)| &= TS(6,4), \\
|\NPL(24,3)| &= \sum_{F\in \NPL(8)} |\boldQ(F,3)|.
\end{align*}
Moreover,
\begin{align*}
\NPL(24,8) \cap \NPL(24,4) &= \varnothing, \\
\NPL(24,8) \cap \NPL(24,3) &= \{C_2^3 \times C_3\}, \\
\NPL(24,4) \cap \NPL(24,3) &= \{C_2\times C_4 \times C_3\}.
\end{align*}
This yields the total number of nilpotent loops of order 24 with a center of size at least 3. 
The set of nilpotent loops whose center contains $C_6$ is the disjoint union of the classes $\boldQexact(F,3;\text{$2$-stretched})$, where $F\in \NPL(8)$. A loop in this class with a strictly larger center must be one of the three abelian groups of order 24. Hence, the number of nilpotent loops of order 24 with a center of size 6 is
\begin{align*}
\sum_{F\in \NPL(8)} |\boldQexact(F,3;\text{$2$-stretched})|-3,
\end{align*}
which is computable by Theorem \ref{thm:p-tame-conditions} and Lemma \ref{lem:C4C2}. 
\end{proof}

The computation of the values in Table \ref{tab:order-24} takes about 100 minutes on a standard personal computer. 

We conclude this paper with three remarks; two of them explain why our method cannot compute the number of nilpotent loops of order 24 with a center of size 2.
\begin{enumerate}
\item We know that there are over $2.5$ million nilpotent loops $F$ of order 12, but we have no explicit list of them. The computation of $|\boldQ(F,2)|$ for all $F\in \NPL(12)$ would take years.
\item The standard GAP routines for matrix groups become very slow when the dimension is high, and the group has a large elementary abelian part. This is the case for $\mathrm{Aut}^{\!*}(F,2)$ for most of the loops $F\in \NPL(12)$. 
\item Our method does not offer a straightforward way to compute the number of nilpotent loops with a cyclic center of order 4 or 8. A possible solution would be the implementation of the linear algebra routines over the ring $\mathbb{Z}/m\mathbb{Z}$. 
\end{enumerate}

\section{Conclusion}

In this paper, we reformulate the cohomological classification and enumeration of central extensions of a nilpotent loop $F$ by an elementary abelian $p$-group $A$ by replacing the usual qoutient viewpoint $H(F,A)=C(F,A)/B(F,A)$ with an explicit orbit problem on the full cocycle space $C(F,A)$. We introduce the affine automorphism group  $\mathrm{Aut}^{\!*}(F,A)$, which combines the natural  $\mathrm{Aut}(F)$ and $\mathrm{Aut}(A)$ action with translations by coboundaries. We prove that its orbits on $C(F,A)$ are in bijection with isomorphism classes $[L,A]$ of central extensions with $L/A \cong F$. This orbit perspective enables a direct application of Burnside's lemma, where the fixed point count becomes linear algebra over finite fields. This supports further reductions via the notion of exact and stretched extensions and the restriction on cocycle subspaces $C(F,p^d,z)$ under tameness conditions. Computationally, the approach reproduces the known enumeration of nilpotent loops of order less than 24 while achieving substantial performance improvements. Our method also gives the counts of nilpotent loops of order 24 with center of size at least 3. Finally, we clarify the current limitations of the method for center size 2 and for cyclic centers of orders 4 and 8. Hence, it identifies several specific challenges that remain open for future work.

\bibliographystyle{abbrvnat}
\bibliography{references}

%%%%%%%%%%%%%%%%%%%%%%%%%%%%%%%%%%%%%%%%%%%%%

\appendix

\section{Matrix interpretation of the affine action}\label{app:matrix}

We describe the affine action of $\mathrm{Aut}^{\!*}(F,A)$ on cocycles introduced in \eqref{eq:Astar-action}. 
A normalized cocycle $\theta\in C(F, A)$ can be expressed in matrix or vector form, and the affine action naturally decomposes into a linear part and a constant part.

Let $F=\{x_1,\dots,x_n\}$ with $x_1=1$. 
The restriction of $\theta$ to pairs $(x_i,x_j)$ with $i,j\ge 2$ yields the $(n-1) \times (n-1)$
matrix with entries in $\mathbb{F}_p^d$. Listing the rows consecutively produces a vector 
\[
\mathbf{t}= (t_{\ell})_{\ell=1}^{d(n-1)^{2}} \in \mathbb{F}_{p}^{d{(n-1)^{2}}};
\qquad
(t_{\ell},t_{\ell+1},\ldots,t_{\ell+d-1}) = \theta(x_{i},x_{j})
\]
Here each $t_\ell \in \mathbb{F}_{p}$ is one coordinate of one of the $d$-tuples $\theta(x,y)$ and have length $d(n-1)^2$.
The correspondence between a matrix entry $(i,j)$ and a vector coordinate $\ell$ is
\[
\ell=(i-2)(n-1)+(j-1)
\]

For $(\alpha,\beta,\tau)\in\mathrm{Aut}^{\!*}(F,A)$, the linear part of the affine action sends
\[
\theta(x_{i},x_{j}) \longmapsto
\theta(x_{i}^{\alpha^{-1}},x_{j}^{\alpha^{-1}})^{\beta}
\]
Thus, if $\ell$ corresponds to $(i,j)$ and $\ell'$ corresponds to $(x^{\alpha^{-1}},y^{\alpha^{-1}})$, where $x_i=x_i^{\alpha^{-1}}$ and $y_j=y_{j}^{\alpha^{-1}}$, then
\[
\mathbf{t}'[\ell]=\bigl(\mathbf{t}[\ell']^{\beta}\bigr)
\]

A matrix represents this linear transformation as
\[
\mathbf{A}\in\mathrm{Mat}_{d(n-1)^{2}}(\mathbb{F}_{p}),
\]
which depends only on $\alpha$ and $\beta$, and satisfies
\[
\mathbf{t}' = \mathbf{t}\,\mathbf{A}
\]

The constant term contributed by $\tau$, for $\tau:F\to A=\mathbb{F}_p^d$ with $\tau(1)=0$, we have
\[
b_{ij}=\partial\tau(x_i,x_j)
 =\tau(x_ix_j)-\tau(x_i)-\tau(x_j)\in\mathbb{F}_p^d
\]
By listing the coordinates of all $b_{ij}$ consecutively, we obtain a vector
\[
\mathbf{b}=(b_\ell)_{\ell=1}^{d(n-1)^2}\in \mathbb{F}_p^{\,d(n-1)^2}
\]

Each $b_\ell$ is a $d$-tuples of length $d(n-1)^2$ with entries from $\mathbb{F}_{p}$.
The vector $\mathbf{b}$ depends on $\alpha$ and $\tau$ but not on $\mathbf{t}$ i.e., $\theta$ and $\mathbf{t'}$ depends on $\alpha, \beta$ and $\mathbf{t}$.

Hence, the full affine action on coordinates is
\begin{equation} \label{eq:t}
 \mathbf{t}'=\mathbf{t}\mathbf{A}+\mathbf{b} \tag{a}
\end{equation}
where $\mathbf{A}$ describes the linear transformation determined by the permutation of matrix entries induced by $\alpha$ together with the application of $\beta$ to each coordinate in $\mathbb{F}_{p}$. Embedding the affine transformation in homogeneous coordinates gives the block matrix.
\[
\begin{bmatrix}
\mathbf{A} & 0 \\[1mm]
\mathbf{b} & 1
\end{bmatrix}
\]
so that, 
\[
[\mathbf{x}\;1]
\begin{bmatrix}
\mathbf{A} & 0 \\[1mm]
\mathbf{b} & 1
\end{bmatrix}
=
[\mathbf{xA}+\mathbf{b}\;1]
\]
This matrix, of size $\bigl(d(n-1)^2+1\bigr)\times\bigl(d(n-1)^2+1\bigr)$ over $\mathbb{F}_{p}$, provides the linear representation of $\mathrm{Aut}^{\!*}(F,A)$.

To count the number of orbits of affine automorphism group $G=\mathrm{Aut}^{\!*}(F,A)$, we have
\[
 |X/G| \;=\; \frac{1}{|G|} \sum_{g\in G} |\mathrm{Fix}(g)|
\]
where $G$ is a group acting on a finite set $X$,
In our case,
\[
X=C(F,A)\cong \mathbb{F}_{p}^{\,d(n-1)^{2}}
\]
So every cocycle is represented by a coordinate vector.
\[
\mathbf{t}\in \mathbb{F}_{p}^{\,d(n-1)^{2}}.
\]

The affine action of an element $g=(\alpha, \beta, \tau) \in G$ on cocycles induces an affine transformation on coordinates, given by 
\[
\mathbf{t'} = \mathbf{t}\mathbf{A}+\mathbf{b}
\]
 
where $\mathbf{A}$ is a $d(n-1)^{2}\times d(n-1)^{2}$ matrix over $\mathbb{F}_{p}$ depending on $(\alpha,\beta)$ and $\mathbf{b}$ is a translation vector depending on $(\alpha,\tau)$.
A cocycle $\mathbf{t}$ is fixed by $g$ exactly when
\[
\mathbf{t}=\mathbf{t}\mathbf{A}+\mathbf{b}
\]
Equivalently,
\[
\mathbf{t}(\mathbf{A}-I) = -\mathbf{b}
\]
This is a system of linear equations over $\mathbb{F}_{p}$.
The coefficient matrix is $\mathbf{A}-I$ and the unknown vector is $\mathbf{t}$.
Let $r=\mathrm{rank}(\mathbf{A}-I)$, then the homogeneous system
\[
\mathbf{t}(\mathbf{A}-I)=\mathbf{0}
\]
has a solution space of dimension
\[
d(n-1)^{2}-r
\]
The affine system
\[
\mathbf{t}(\mathbf{A}-I) = -\mathbf{b}
\]
has either no solution or it has an affine space of the same dimension.
When a solution exists, the set of fixed points has size.
\[
|\mathrm{Fix}(g)| = p^{\,d(n-1)^{2}-r}.
\]
Thus, the fixed point count depends only on the rank of $\mathbf{A}-I$ and not on the particular solution.

Since the reduction of Burnside's lemma into conjugacy classes of $G$ gives 

\[
|X\,/\,G|
=
\frac{1}{|G|}
\sum_{C\in\mathrm{Conj}(G)}
|C|\cdot |\mathrm{Fix}(g_C)|,
\]
For each such representative, one computes the matrix $\mathbf{A}$ and the rank of $\mathbf{A}-I$.
If the affine system admits a solution, then
\[
|\mathrm{Fix}(g_C)| = p^{d(n-1)^{2}-\mathrm{rank}(\mathbf{A}-I)}.
\]

\end{document}